\theoremstyle{definition}
\newtheorem{mydef}{Definition}[section]
\newtheorem{lem}[mydef]{Lemma}
\newtheorem{thm}[mydef]{Theorem}
\newtheorem{conjecture}[mydef]{Conjecture}
\newtheorem{claim}[mydef]{Claim}
\newtheorem{prop}[mydef]{Proposition}
\newtheorem{defin}[mydef]{Definition}
\newtheorem{remark}[mydef]{Remark}
\newtheorem{fact}[mydef]{Fact}
\newcommand{\fct}[2]{{}^{#1}#2}
\newcommand{\ba}{\bar{a}}
\newcommand{\cf}[1]{\text{cf} (#1)}
\newcommand{\seq}[1]{\langle #1 \rangle}
\newcommand{\rest}{\upharpoonright}
\newcommand{\Mod}{\text{Mod}}
\def\lea{\le}
\newcommand{\K}{K}
\def\lee{\preceq}
\newbox\noforkbox \newdimen\forklinewidth
\noforkbox\hbox{\lower 2pt\box1\lower
2pt\box0\relax}
\def\unionstick{\mathop{\copy\noforkbox}\limits}
\def\1nf{\unionstick^{(1)}}
\def\2nf{\unionstick^{(2)}}
\def\3nf{\unionstick^{(3)}}
\newcommand{\EM}{\operatorname{EM}}
\newcommand{\LS}{\text{LS}}
\title{Categoricity and infinitary logics}
\date{\today\\
AMS 2010 Subject Classification: Primary: 03C48. Secondary: 03C25, 03C45, 03C52, 03C55, 03C75}
\keywords{Abstract elementary classes; Categoricity; Amalgamation; Infinitary logics; Model-theoretic forcing; Downward Löwenheim-Skolem; Solvability; Elementary substructure}
\author{Will Boney}
\email{wboney@math.harvard.edu}
\urladdr{http://math.harvard.edu/\textasciitilde wboney/}
\address{Department of Mathematics, Harvard University, Cambridge, Massachusetts, USA}
\thanks{This material is based upon work done while the first author was supported by the National Science Foundation under Grant No. DMS-1402191.}
\author{Sebastien Vasey}
\email{sebv@cmu.edu}
\urladdr{http://math.cmu.edu/\textasciitilde svasey/}
\address{Department of Mathematical Sciences, Carnegie Mellon University, Pittsburgh, Pennsylvania, USA}
\thanks{The second author is supported by the Swiss National Science Foundation.}
\begin{document}

\begin{abstract}
  We point out a gap in Shelah's proof of the following result:

  \begin{claim}
    Let $K$ be an abstract elementary class categorical in unboundedly many cardinals. Then there exists a cardinal $\lambda$ such that whenever $M, N \in K$ have size at least $\lambda$, $M \lea N$ if and only if $M \lee_{L_{\infty, \LS (K)^+}} N$.
  \end{claim}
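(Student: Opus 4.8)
The plan is to route everything through Shelah's presentation theorem and the Ehrenfeucht--Mostowski models it produces, using categoricity only to pin down the structure of models at a cofinal class of cardinals, and then to transfer the equivalence to all sufficiently large sizes. Write $\chi = \LS(K)$. The presentation theorem furnishes a template $\Phi$ in an expanded language $\tau' \supseteq \tau(K)$ with $|\tau'| \le \chi$ and built-in Skolem functions, so that for every linear order $I$ the reduct $\EM_\tau(I,\Phi)$ lies in $K$, the assignment $I \mapsto \EM_\tau(I,\Phi)$ is monotone for $\lea$, and every element of $\EM_\tau(I,\Phi)$ is a $\tau'$-term in finitely many order-indiscernibles. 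I would fix, for each infinite $\mu$, a highly homogeneous index order $I_\mu$ of size $\mu$ (a $\chi^+$-saturated dense linear order), so that inclusions $I_\mu \subseteq I_\nu$ carry a $\chi^+$-back-and-forth system of partial order-isomorphisms of size $\le \chi$. Categoricity in unboundedly many cardinals then guarantees that at each categoricity cardinal $\kappa$ the unique model of size $\kappa$ is isomorphic to $\EM_\tau(I_\kappa,\Phi)$, so the equivalence can first be established for a cofinal class of model sizes and then propagated.

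The engine for the forward direction is an Ehrenfeucht--Mostowski back-and-forth argument. Two ingredients combine: first, the Löwenheim--Skolem axiom supplies small $\lea$-submodels, and transitivity of $\lea$ propagates the order, so that $M_0 \lea M \lea N$ with $\|M_0\| \le \chi$ yields $M_0 \lea N$ and hence makes $\id_{M_0}$ a $\lea$-embedding into $N$. Second, a partial order-isomorphism $p \colon I \rightharpoonup J$ of size $\le \chi$ induces, by indiscernibility, a partial $\tau$-isomorphism between the size-$\le\chi$ Skolem hulls of $\{a_i : i \in \dom{p}\}$ and $\{a_j : j \in \ran{p}\}$; running this over a $\chi^+$-back-and-forth system on the index orders, and noting that identities on $\le\chi$-sized index sets are included, produces a $\chi^+$-back-and-forth family with small identities, which is exactly a witness for $\lee_{L_{\infty, \LS(K)^+}}$. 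Applied to the homogeneous models this gives $\EM_\tau(I_\mu,\Phi) \lee_{L_{\infty, \LS(K)^+}} \EM_\tau(I_\nu,\Phi)$, and so the forward implication holds at categoricity cardinals.

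For the backward direction at categoricity cardinals I would exploit that the ambient model there is the \emph{unique}, hence model-homogeneous, model of its size. Given a $\chi^+$-back-and-forth family $\mathcal{F}$ witnessing $M \lee_{L_{\infty, \LS(K)^+}} N$, I would use homogeneity of $N$ to realize, inside $N$, the configurations matched by $\mathcal{F}$, converting the syntactic matching into an actual $\lea$-map; the certification that the resulting embedding is genuinely a $\lea$-embedding comes from the $\mathrm{PC}$-presentation of $\lea$, since for a fixed $\le\chi$-sized bottom model the relation "$M_0 \lea N$" reduces to the existence of a suitable $\tau'$-expansion, and the $\le\chi$-sized piece is controlled by the named parameters. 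Writing $M$ as a directed union of small $\lea$-submodels and applying the chain axiom would then assemble $M \lea N$.

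The \textbf{main obstacle}, and the step I expect to be genuinely delicate, is that categoricity is assumed only in unboundedly many cardinals, so homogeneity of the models — equivalently, the back-and-forth systems and the certification of $\lea$ used in both directions — is directly available only at the cofinal categoricity cardinals, whereas the conclusion must hold for all $M, N$ of size $\ge \lambda$, including sizes at which $K$ may carry non-saturated models. Neither implication is a general fact about abstract elementary classes (already for first-order classes, $\lea$ need not refine $\lee_{L_{\infty,\omega}}$, nor conversely), so the descent to arbitrary large sizes cannot be routine and must genuinely invoke categoricity. I would attempt to sandwich an arbitrary large pair $M \lea N$ between homogeneous Ehrenfeucht--Mostowski models at categoricity cardinals above them and exploit the locality of $L_{\infty, \LS(K)^+}$ — that $\lee_{L_{\infty, \LS(K)^+}}$ is determined by $\le\chi$-sized subsets together with Löwenheim--Skolem — to pull the equivalence down to $M$ and $N$; but making this descent sound simultaneously for both directions, and extracting a single uniform threshold $\lambda$, is exactly where I expect the argument to break. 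In particular, the tacit step that $K$ and $\lea$ are captured by $L_{\infty, \LS(K)^+}$-equivalence between two \emph{fixed arbitrarily large} models is not supplied by Chang's theorem, which only bounds the axiomatizability complexity of the projective reduct and says nothing about the behaviour of $\lea$ between two given large models; this is the most plausible location of the gap.
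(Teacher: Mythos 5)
You have, perhaps without realizing it, attempted to prove the one statement in this paper that nobody can prove: this Claim is precisely the statement whose proof the paper identifies as gapped. The paper itself offers \emph{no} proof of it; its entire point is that Shelah's argument (Conclusion IV.2.14 of his book) is broken, and that the authors' own earlier attempted repair was retracted (the footnote records that Lemma 2.20 of pre-October 2015 versions is false). What the paper actually proves is the \emph{conditional} result: assuming this Claim, one gets a stationary class of cardinals with categoricity, amalgamation, and a good frame (Theorem \ref{main-thm-2}, via Facts \ref{shelah-ap} and \ref{shelah-categ-transfer}). The specific gap in Shelah's approach is also different from the one you name: Shelah works with model-theoretic forcing, proves a downward L\"{o}wenheim--Skolem theorem for classical $L_{\infty,\theta}$-elementarity, and then uses it as though it applied to the forcing satisfaction relation; but the agreement of forcing satisfaction with classical satisfaction is essentially what is to be proved, so the argument is circular. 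Your diagnosis (that Chang's presentation theorem cannot control $\lea$ between two fixed large models) is a sensible observation, but it is not where Shelah's proof fails.

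Your own sketch, as you candidly admit, also does not close. Concretely: (a) the backward direction needs none of your homogeneity machinery --- it is a general fact about AECs, with no categoricity hypothesis, that $M \lee_{L_{\infty, \LS(K)^+}} N$ and $N \in K$ imply $M \in K$ and $M \lea N$ (Kueker's Theorem 7.2, Shelah IV.1.10(3), both quoted in Lemma \ref{event-synt-lem}); the hard direction is the forward one. (b) Your forward direction fails already \emph{at} categoricity cardinals as written: categoricity at $\kappa$ tells you $M \cong \EM_{\tau(K)}(I_\kappa, \Phi) \cong N$, but your back-and-forth only certifies elementarity of the \emph{specific} inclusions $\EM_{\tau(K)}(I,\Phi) \lea \EM_{\tau(K)}(J,\Phi)$ induced by $I \subseteq J$; an arbitrary given embedding $M \lea N$ need not be conjugate to any such index-order inclusion, and without amalgamation (contrast Proposition \ref{easy-ap}, which uses uniqueness of saturated models to realign the embedding) there is no way to repair this. (c) Your $\chi^+$-saturated index order of size $\mu$ exists only under cardinal arithmetic such as $\mu = \mu^{\chi}$, which is exactly why the genuine partial results (Fact \ref{syntactic-facts}) carry hypotheses like $\lambda = \lambda^{<\theta}$, or $\lambda > 2^{<\theta}$ with $\cf{\lambda} \ge \theta$. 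Extracting a single uniform threshold $\lambda$ from categoricity in unboundedly many cardinals alone --- with no arithmetic assumption on the categoricity cardinals and for arbitrary pairs $M \lea N$ --- is the open problem, not a delicate-but-doable step.
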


  The importance of the claim lies in the following theorem, implicit in Shelah's work:

  \begin{thm}\label{main-thm-2}
    Assume the claim. Let $K$ be an abstract elementary class categorical in unboundedly many cardinals. Then the class of $\lambda$ such that:
    \begin{enumerate}
      \item $K$ is categorical in $\lambda$;
      \item $K$ has amalgamation in $\lambda$; and
      \item there is a good $\lambda$-frame with underlying class $K_\lambda$
    \end{enumerate}
    is stationary.
  \end{thm}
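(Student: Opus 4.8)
The plan is to show that the set $S$ of cardinals satisfying (1)--(3) meets every club: given a closed unbounded class $C$ of cardinals, I will produce $\lambda \in C$ satisfying all three conditions. The engine will be Shelah's notion of solvability, fed by the claim. The claim tells us that above some $\lambda_0$ the abstract ordering $\lea$ coincides with $\lee_{L_{\infty, \LS(K)^+}}$, so that on large models the class is governed by the infinitary logic; in particular Galois types over models of size $\ge \lambda_0$ admit a syntactic (back-and-forth) description, yielding tameness and strong control over how structures built at one cardinal are recognized inside $K$ at another. This control, together with categoricity in unboundedly many cardinals, is exactly what lets Ehrenfeucht--Mostowski arguments propagate structural information between cardinals.

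First I would set up the Ehrenfeucht--Mostowski machinery. Since $K$ is categorical in unboundedly many cardinals it has arbitrarily large models, so by Shelah's presentation theorem there is a template $\Phi$ proper for linear orders with $\EM(I, \Phi) \in K$ of size $|I| + \LS(K)$ for every linear order $I$. Next I would establish solvability: fixing a target in $C$ and a categoricity cardinal $\chi$ far above it (available since these are unbounded), I would use categoricity in $\chi$ together with the claim to argue that the $L_{\infty, \LS(K)^+}$-theory pinned down at $\chi$ reflects downward, so that for $\lambda$ in a suitable class the models $\EM(I,\Phi)$ with $|I| = \lambda$ are $\lea$-universal and $\lea$-homogeneous, i.e.\ superlimit. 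This is where the claim does its work: without the agreement of $\lea$ and $\lee_{L_{\infty,\LS(K)^+}}$ one cannot conclude that the EM models remain correctly placed in $K$ as the cardinal varies. The outcome is that $K$ is $\lambda$-solvable for $\lambda$ in a stationary class.

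Then I would extract the three conclusions. For amalgamation I would run the standard amalgamation-from-categoricity argument with EM models: categoricity in arbitrarily large $\chi$ forces amalgamation in every sufficiently large $\lambda$, so (2) holds on a tail. With amalgamation and the tameness coming from the claim in hand, a categoricity transfer theorem moves categoricity in unboundedly many cardinals onto a club, giving (1). For the good $\lambda$-frame of (3) I would use solvability: solvability in $\lambda$, established above for $\lambda$ in a stationary class, yields a superlimit model, from which one defines nonforking through nonsplitting (equivalently, through the basic/minimal types) and verifies the good-frame axioms on $K_\lambda$, using stability and superstability, themselves consequences of categoricity in a tail.

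I expect the main obstacle to be the verification of the good-frame axioms, especially local character, continuity, and symmetry of nonforking; this is precisely the step in Shelah's argument whose soundness the claim is designed to secure, since those computations rest on the syntactic and back-and-forth description of types furnished by the agreement of $\lea$ with $\lee_{L_{\infty, \LS(K)^+}}$. To conclude I assemble the pieces: (1) and (2) hold on clubs while (3) holds on a stationary class, so $S$ contains the intersection of two clubs with a stationary class, which is again stationary. Intersecting with the given club $C$ leaves a stationary, hence nonempty, class, producing $\lambda \in C$ that satisfies (1)--(3). This shows $S$ is stationary.
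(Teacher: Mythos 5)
You have the right global strategy (meet an arbitrary club $C$) and point at the right engine (EM models, solvability, Shelah's Chapter IV machinery), but the two statements carrying your argument are not theorems, and this is a genuine gap. First, ``categoricity in arbitrarily large $\chi$ forces amalgamation in every sufficiently large $\lambda$, so (2) holds on a tail'' is not a standard argument: getting amalgamation on a tail from categoricity is essentially Grossberg's open conjecture (see the footnote in the introduction). What is actually available is Fact \ref{shelah-ap}, which gives (disjoint) amalgamation in $\lambda$ only for cardinals of a very special shape ($\LS (K) < \lambda = \beth_\lambda$, $\cf{\lambda} = \aleph_0$), only below a pseudo-solvability cardinal $\mu$, and only when the syntactic hypothesis ($M \lea N$ implies $M \lee_{L_{\infty, \theta}} N$ for all $\theta < \lambda$) holds for models of size $\mu$. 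Second, no transfer theorem ``moves categoricity onto a club''; what exists is Fact \ref{shelah-categ-transfer}, which transfers categoricity only to limits $\lambda_\omega = \beth_{\lambda_\omega}$ of categoricity cardinals under the same syntactic hypotheses. (Your side remark that the claim yields tameness is also unjustified: the claim concerns the ordering $\lea$, not Galois types.) These restrictions are precisely why the theorem asserts only \emph{stationarity}. Finally, deferring the good-frame axioms as ``the main obstacle'' concedes the hardest third of the statement; the intended proof handles it by citing Fact \ref{shelah-ap}(2), whose extra hypothesis --- that $\lambda$ be a limit of beth fixed points of cofinality $\aleph_0$ --- you never arrange.

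The missing construction is the following. Using the claim, let $\mu_0 (\theta)$ be the least categoricity cardinal $\mu > \theta$ such that $M \lea N$ implies $M \lee_{L_{\infty, \theta}} N$ for all $M, N \in K_{\ge \mu}$. Given the club $C$, build increasing continuous sequences $\seq{\lambda_i : i \le \omega \cdot \omega}$ and $\seq{\lambda_i' : i \le \omega \cdot \omega}$ such that $\lambda_i' \in C$, $\lambda_j < \lambda_i' \le \lambda_i$ for $j < i$, and at zero and successor stages $\lambda_i$ is a categoricity cardinal above $\LS (K)$ and above $\mu_0 (\beth_{\lambda_j})$ for $j < i$; at limit stages take $\lambda_i = \lambda_i' = \sup_{j < i} \lambda_j$, which lies in $C$ (closure), satisfies $\lambda_i = \beth_{\lambda_i}$ and $\cf{\lambda_i} = \aleph_0$, and is categorical by Fact \ref{shelah-categ-transfer}. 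Then $\lambda := \lambda_{\omega \cdot \omega}$ lies in $C$, is categorical, and the limit stages $\lambda_{\omega \cdot n}$ witness that $\lambda$ is a limit of beth fixed points of cofinality $\aleph_0$. Since $K$ is categorical (hence pseudo-solvable) in some $\mu \ge \mu_0 (\lambda)$, Fact \ref{shelah-ap} applies and yields amalgamation in $\lambda$ together with a type-full good $\lambda$-frame on $(K_{\Phi}^\ast)_\lambda$, which coincides with $K_\lambda$ by categoricity in $\lambda$. Note the contrast with your final assembly: rather than intersecting separate classes for (1), (2), (3) --- classes you have not actually produced --- the construction exhibits a single $\lambda$ in the given club where all three properties hold at once, which is all that stationarity requires.
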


  We give a proof and discuss some related questions.
\end{abstract}

\maketitle

\tableofcontents
\section{Introduction}

A major driving force in the field of classification theory for non-elementary classes is Shelah's categoricity conjecture\footnote{For a review of the literature on the conjecture, see the introduction of \cite{ap-universal-v6}.}:

\begin{conjecture}[Open problem D.(3a) in \cite{shelahfobook}]\label{categ-conj}
  If $L$ is a countable language and $\psi \in L_{\omega_1, \omega}$ is categorical in one $\mu \ge \beth_{\omega_1}$, then it is categorical in all $\mu \ge \beth_{\omega_1}$.
\end{conjecture}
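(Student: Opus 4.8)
This is Shelah's categoricity conjecture for $L_{\omega_1,\omega}$, which is open; accordingly what follows is a program of attack rather than a proof, organized around the machinery quoted in the excerpt. The first step is the standard reduction to abstract elementary classes. Fix a countable fragment $\F \subseteq L_{\omega_1,\omega}$ containing $\psi$ and let $K$ be the class of models of $\psi$ ordered by $\F$-elementary substructure. Then $K$ is an AEC with $\LS(K) = \aleph_0$, and by Shelah's presentation theorem $K$ is, up to reduct, a pseudo-elementary class, so it carries Ehrenfeucht--Mostowski models over arbitrary linear orders. In particular $K$ has arbitrarily large models, a Hanf number, and enough indiscernibles to run Morley-style ``blow-up'' arguments; categoricity of $\psi$ in $\mu \ge \beth_{\omega_1}$ becomes categoricity of $K$ in $\mu$.

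The plan is then to feed $K$ into the structural machinery summarized by Theorem~\ref{main-thm-2}. The difficulty of principle is that that theorem takes \emph{categoricity in unboundedly many cardinals} as input, whereas one starts only from categoricity in the single cardinal $\mu$; closing this gap is the heart of the program. Concretely I would exploit that categoricity in $\mu \ge \beth_{\omega_1}$ forces the model of size $\mu$ to be model-homogeneous (saturated), which via the $\EM$-model analysis should yield amalgamation and a transferable notion of superstability below $\mu$, hence a good $\lambda$-frame for some $\lambda < \mu$. Here the Claim plays its structural role: identifying the abstract ordering $\prec$ with $\preceq_{L_{\infty,\LS(K)^+}}$ on large models makes $K$ definable in an infinitary logic, so that syntactic omitting-types and $\EM$ arguments apply to the abstract ordering itself. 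One then runs categoricity transfer --- downward through the saturation/omitting-types analysis, upward through the good-frame technology --- to spread categoricity across an interval, hoping to reach unboundedly many categoricity cardinals and thereby license Theorem~\ref{main-thm-2}, whose stationary class of good frames would in turn be used to fill the entire interval $[\beth_{\omega_1}, \infty)$.

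The main obstacle --- and precisely the reason the conjecture remains open --- is the upward transfer of categoricity in ZFC. The good-frame engine transfers categoricity cleanly only in the presence of a locality property for Galois types (tameness): tameness follows from large cardinals, but it is not known to follow in ZFC from categoricity of an $L_{\omega_1,\omega}$-sentence alone. Shelah's route instead tries to extract enough locality from categoricity in a sufficiently long interval together with set-theoretic hypotheses such as weak diamond, and it is exactly at the seam between these local structural results and the global upward transfer --- where the Claim is invoked --- that the argument is delicate and where the gap this paper points out arises. A genuine proof would therefore have to either (i) derive tameness in ZFC from categoricity, or (ii) replace the upward transfer by a direct construction, from the good-frame data, of the unique model in each cardinal above $\beth_{\omega_1}$; neither is presently available without extra hypotheses, so I would expect any unconditional resolution to hinge on new input at precisely this step.
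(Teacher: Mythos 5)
You have not proved anything, and you say so yourself --- which is the correct assessment, because the statement is not a theorem of this paper at all. It is stated as Conjecture~\ref{categ-conj}, cited as Open Problem D.(3a) in Shelah's book, and the paper offers no proof of it (indeed, the paper's main point is to flag a \emph{gap} in Shelah's proof of Claim~\ref{shelah-claim-0}, one of the intermediate results your program would rely on). So there is no proof in the paper to compare your attempt against, and any submission presenting a ``proof'' of this statement should be rejected on its face; your candor in labeling it a program rather than a proof is the right move.

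As a program, your sketch is broadly consistent with the literature, but let me name the gaps concretely, since they are exactly where the roadmap fails to be a proof. First, the reduction step: to invoke Theorem~\ref{main-thm-2} (or anything in this paper) you need categoricity in \emph{unboundedly many} cardinals; the Hanf-number argument the paper cites (\cite[Conclusion 15.13]{baldwinbook09}) gives this only from categoricity in a \emph{high-enough} cardinal, and $\beth_{\omega_1}$ is far below the known bound (roughly the Hanf number for the associated AEC, not the Hanf number $\beth_{\omega_1}$ for existence of models of $L_{\omega_1,\omega}$-sentences), so even the eventual version does not follow for the threshold in the conjecture. Second, Theorem~\ref{main-thm-2} is explicitly \emph{conditional on the Claim}, whose only known proof the paper shows to be gapped; your plan inherits that gap rather than closing it. Third, as you note, the upward transfer from a good $\lambda$-frame to all cardinals $\ge \beth_{\omega_1}$ requires locality (tameness-type) hypotheses not known in ZFC from $L_{\omega_1,\omega}$-categoricity; your alternatives (i) and (ii) are open problems, not steps. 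None of this is a criticism of your understanding --- the sketch shows you know where the difficulties live --- but the verdict must be that the statement remains a conjecture, the proposal proves nothing, and it is not comparable to any argument in the paper because the paper contains none.
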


In the framework of abstract elementary classes (AECs), Shelah states an eventual version of the conjecture as:

\begin{conjecture}[Conjecture N.4.2 in \cite{shelahaecbook}]\label{shelah-eventual-categ}
  An AEC that is categorical in a high-enough cardinal is categorical on a tail of cardinals.
\end{conjecture}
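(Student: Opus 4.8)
The plan is to treat this as the endpoint of a transfer program rather than a single computation, feeding the tools assembled above into the standard categoricity-transfer machinery. Since Theorem~\ref{main-thm-2} takes as hypothesis categoricity in \emph{unboundedly many} cardinals, the first task is to bridge the gap between that hypothesis and the weaker one of the conjecture, namely categoricity in a single high-enough $\lambda$. The standard route is \emph{upward categoricity transfer}: in an AEC with amalgamation that is $\LS(K)$-tame, categoricity in one successor cardinal above the relevant Hanf number propagates to all larger cardinals (Grossberg--VanDieren). Thus if I can extract amalgamation and tameness from categoricity in one sufficiently large cardinal, I immediately obtain unboundedly many categoricity cardinals, and Theorem~\ref{main-thm-2} applies.

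First I would secure the structural hypotheses. A partial amalgamation can be extracted from categoricity in a cardinal of high cofinality above the Hanf number together with the absence of maximal models, via Shelah's analysis of splitting. Tameness is the more delicate input, and this is precisely where the \emph{claim} enters: its conclusion identifies the abstract ordering $\lea$ with $\lee_{L_{\infty, \LS (K)^+}}$ on all large models, and equivalence in such an infinitary logic is a locality statement. So Galois types over large models should be separated by their $L_{\infty, \LS (K)^+}$-types, which is exactly the locality that yields tameness above $\lambda$. I would make this implication precise and then feed amalgamation and tameness back into the upward transfer to reach the hypothesis of Theorem~\ref{main-thm-2}.

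With unboundedly many categoricity cardinals in hand, Theorem~\ref{main-thm-2} produces a stationary class $S$ of cardinals $\lambda$ carrying a good $\lambda$-frame together with categoricity and amalgamation in $\lambda$. The next step is to globalize: under tameness a good $\lambda$-frame extends to a good frame on the tail $[\lambda, \infty)$, and hence to a global independence (nonforking) relation with the superstability-like properties one needs (Boney--Grossberg and subsequent work). From superstability and a global frame one runs the categoricity transfer for tame superstable AECs in both directions --- upward by continuity of the frame, downward by a Morleyization and unidimensionality argument --- to conclude that the set of categoricity cardinals contains a tail, which is the assertion of the conjecture.

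The hard part will be the ZFC derivation of tameness and amalgamation from categoricity in a \emph{single} cardinal: this is exactly the point at which the conjecture remains open, and it is why the \emph{claim} (and the gap in its proof flagged in this paper) is so consequential. As a sanity check on the pipeline I would first run the whole argument under a large-cardinal hypothesis --- a proper class of strongly compact cardinals gives tameness and amalgamation outright (Boney) --- where every step goes through and the conjecture follows cleanly. The genuine obstacle is then to replace this set-theoretic input by the infinitary-logic machinery underlying the claim, so that the locality needed for tameness is produced in ZFC rather than assumed.
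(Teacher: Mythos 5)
You are attempting to prove a statement that the paper explicitly presents as an \emph{open conjecture} (Shelah's eventual categoricity conjecture, Conjecture N.4.2 of \cite{shelahaecbook}); the paper neither proves it nor claims to --- its entire point is that even the much weaker Claim \ref{shelah-claim-0}, on which Theorem \ref{main-thm-2} is conditioned, has a gap in Shelah's proof. Your proposal honestly concedes this at the end (``this is exactly the point at which the conjecture remains open''), so what you have written is a research program, not a proof. Beyond that global issue, there is a concrete misstep at the very first stage: the bridge from categoricity in a single high-enough cardinal to categoricity in unboundedly many cardinals does \emph{not} need the Grossberg--VanDieren upward transfer (which requires amalgamation, tameness, and categoricity in a \emph{successor} --- hypotheses you do not have and which are precisely what is at stake). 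As the paper's introduction notes, this reduction is already available in ZFC by a Hanf number argument using the axiom of replacement (see \cite[Conclusion 15.13]{baldwinbook09}). Routing this step through machinery that itself presupposes amalgamation and tameness makes your pipeline circular where an elementary argument suffices.

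The second genuine gap is your inference from the claim to tameness. The claim identifies the \emph{ordering}: $M \lea N$ iff $M \lee_{L_{\infty, \LS(K)^+}} N$ for large models. It says nothing about Galois types being determined by syntactic $L_{\infty, \theta}$-types. Without amalgamation, Galois types are defined by closing the orbit relation under chains of amalgams, and two elements realizing the same infinitary syntactic type over $M$ can a priori lie in different Galois types; no result in this paper (or, in ZFC, in the literature) converts syntactic characterizability of $\lea$ into the locality of Galois types --- if such an implication held unconditionally, one would be far ahead of the known state of the art. Moreover, even the conditional portion of your plan rests on the claim itself, which is unproven. Finally, your endgame --- extending a good $\lambda$-frame from the stationary class $S$ of Theorem \ref{main-thm-2} to a tail, and then transferring categoricity both upward and downward via unidimensionality --- is likewise not a ZFC theorem at this level of generality; the known transfers require successor categoricity, large cardinals (e.g., strongly compacts, as in your sanity check), or special classes such as the universal classes of \cite{ap-universal-v6}. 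In sum, your proposal correctly identifies where the bottleneck lies, but every load-bearing step either is open, depends on the gapped claim, or is replaced by a known elementary argument in the paper.
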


More precisely, there should exist a function $\lambda \mapsto \mu_\lambda$ such that if $K$ is an AEC categorical in \emph{some} $\mu \ge \mu_{\LS (K)}$, then it is categorical in \emph{all} $\mu' \ge \mu_{\LS (K)}$. There are several conjectures as to what the above function should be. Hart and Shelah \cite{hs-example} have shown that, as opposed to the first-order setup, $\mu_{\aleph_0} \ge \aleph_{\omega}$ and Shelah conjectures that this is essentially optimal in Conjecture N.4.3 of \cite{shelahaecbook}. Still he says it is probably more realistic to expect to prove $\mu_{\lambda} \le \beth_{(2^{\lambda})^+}$. 

However if one is only interested in the \emph{existence} of the map $\lambda \mapsto \mu_\lambda$, then a Hanf number argument (using the axiom of replacement, see \cite[Conclusion 15.13]{baldwinbook09}) shows that categoricity in a high-enough cardinal implies categoricity in unboundedly many cardinals. Therefore for the purpose of establishing Conjecture \ref{shelah-eventual-categ} (sometimes also called Shelah's \emph{eventual} categoricity conjecture), we can restrict ourselves to AECs categorical in unboundedly many cardinals\footnote{One can of course also ask what the Hanf number for unbounded categoricity is. That is, given $\lambda$, what is the least $\mu_0$ such that an AEC with Löwenheim-Skolem number $\lambda$ categorical in some $\mu \ge \mu_0$ is categorical in a unboundedly many cardinals. Even assuming large cardinals, we are not aware of any known explicit bound.}. In such AECs, several difficulties with dealing with categoricity in a single cardinal disappear. For example, assuming amalgamation and no maximal models, a major issue is to show that a class categorical in a $\lambda > \LS (K)$ has a Galois-saturated model in $\lambda$. This is one reason why Shelah assumes that $\lambda$ is regular in \cite{sh394}. However if $K$ is also categorical in a $\lambda' > \lambda$, then $K$ will be Galois-stable in $\lambda$, hence the model of size $\lambda$ will automatically be Galois-saturated. A consequence of this result is:

\begin{prop}\label{easy-ap}
  Assume that $K$ is an AEC with amalgamation and no maximal models, categorical in unboundedly many cardinals. Let $\lambda > \LS (K)$ be such that $K$ is categorical in $\lambda$. For any $M, N \in K_{\lambda}$, if $M \lea N$, then $M \lee_{L_{\infty, \LS (K)^+}} N$.
\end{prop}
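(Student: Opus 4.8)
\emph{Setup and reduction to a back-and-forth system.} Write $\kappa = \LS(K)^+$ and fix a monster model $\sea$ for $K$ (available from amalgamation, no maximal models, and categoricity in a cofinal class of cardinals), inside which $M$ and $N$ are viewed as $\lea$-submodels with $M \lea N$. The plan is to prove $M \lee_{L_{\infty, \kappa}} N$ via the standard back-and-forth characterization of infinitary elementarity: it suffices to produce a family $I$ of isomorphisms between $\lea$-submodels of $M$ and of $N$, each of size $\le \LS(K)$, such that (a) the inclusion map on any tuple from $M$ of length $\le \LS(K)$ belongs to $I$; (b) $I$ has the back-and-forth extension property; and (c) $I$ is closed under $\lea$-increasing unions of length $\le \LS(K)$. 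Condition (c) is what allows the system to absorb the quantifier blocks of length $\le \LS(K)$ permitted by $L_{\infty, \LS(K)^+}$, and this is exactly where the value $\LS(K)^+$ is forced: a $\lea$-submodel of size $\LS(K)$ can swallow $\le \LS(K)$ new elements and remain of size $\LS(K)$.

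\emph{The one external input.} Since $K$ is categorical in unboundedly many cardinals, there is $\lambda' > \lambda$ in which $K$ is categorical; as recalled before the statement, $K$ is then Galois-stable in $\lambda$ and the model of size $\lambda$ is Galois-saturated. Because $M \cong N$ by categoricity in $\lambda$, both $M$ and $N$ are $\lambda$-Galois-saturated, and since $\lambda > \LS(K)$ each of them realizes every Galois type over a $\lea$-submodel of size $\le \LS(K)$. This saturation is the only ingredient used beyond the AEC axioms.

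\emph{Defining $I$ and checking (a), (c).} Let $I$ consist of the isomorphisms $h : M_0 \cong N_0$ with $M_0 \lea M$, $N_0 \lea N$, $\|M_0\| = \|N_0\| = \LS(K)$, together with their restrictions to subdomains. For (a): given $\bar a$ from $M$ of length $\le \LS(K)$, Löwenheim--Skolem gives $M_0 \lea M$ with $\bar a \subseteq M_0$ and $\|M_0\| = \LS(K)$; by transitivity of the AEC ordering, $M_0 \lea M \lea N$ yields $M_0 \lea N$, so $M_0$ is simultaneously a submodel of $M$ and of $N$ and the identity on $M_0$ lies in $I$, hence so does the inclusion on $\bar a$. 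Condition (c) is immediate from the AEC axioms: the union of a $\lea$-increasing chain of length $\le \LS(K)$ of size-$\LS(K)$ submodels is again a $\lea$-submodel of size $\LS(K)$, and the union of the corresponding isomorphisms is an isomorphism of the unions.

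\emph{The main obstacle: step (b).} Take $h : M_0 \cong N_0$ in $I$ and $c \in N$; the ``forth'' direction with $a \in M$ is symmetric, interchanging the roles of the two saturated models. The first move transports $\gtp(c / N_0; \sea)$ across $h^{-1}$ to a Galois type over $M_0$; saturation of $M$ over the size-$\LS(K)$ model $M_0$ yields $d \in M$ realizing it, and applying an automorphism of $\sea$ extending $h$ shows that $h \cup \{(d,c)\}$ is again a Galois-type-preserving partial isomorphism. The genuine difficulty is that completing $h \cup \{(d,c)\}$ to an isomorphism of \emph{models} need not keep the range inside $N$: the image of a submodel of $M$ under an extension of $h$ can escape $N$. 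I expect this to be the crux, and I would resolve it by interleaving the two saturations in a back-and-forth of length $\LS(K)$, building $\lea$-increasing chains $(P_i)_{i<\LS(K)}$ in $M$ and $(Q_i)_{i<\LS(K)}$ in $N$ with $P_0 = M_0$, $Q_0 = N_0$, $c \in Q_1$, and isomorphisms $h_i : P_i \cong Q_i$ extending $h$, using saturation of $M$ at odd stages to pull elements of $N$ back into $M$ and saturation of $N$ at even stages to push elements of $M$ forward into $N$. Taking unions and invoking (c) gives $M_0' = \bigcup_i P_i \lea M$ and $N_0' = \bigcup_i Q_i \lea N$ of size $\LS(K)$ and $h' = \bigcup_i h_i \in I$ with $c \in N_0'$ and $h' \supseteq h$. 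With (a)--(c) in hand, the back-and-forth characterization delivers $M \lee_{L_{\infty, \LS(K)^+}} N$, as desired.
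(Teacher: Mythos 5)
Your reduction to saturation (via \cite[Claim 1.7]{sh394}) and your steps (a) and (c) are fine, as is the first move of (b): finding $d \in M$ with $h \cup \{(d,c)\}$ Galois-type-preserving. The gap is exactly at the point you flag as ``the genuine difficulty,'' and the interleaving you propose does not resolve it. Your construction demands, at \emph{every} stage $i$, an isomorphism $h_i : P_i \cong Q_i$ between actual $\lea$-submodels $P_i \lea M$ and $Q_i \lea N$, with $c \in Q_1$. But producing $h_1 : P_1 \cong Q_1$ with $c \in Q_1$ is already the whole problem: after the saturation step you hold only a type-preserving \emph{set} map $h \cup \{(d,c)\}$, and to close it off to an isomorphism of models you must embed some model $Q_1 \lea N$ containing $N_0 \cup \{c\}$ into $M$ compatibly with $h^{-1}$. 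That is $\lambda$-\emph{model-homogeneity} of $M$, not $\lambda$-saturation: saturation realizes Galois types of single elements over models, and Galois types over sets such as $N_0 \cup \{c\}$ are simply not available in an AEC. Alternating which side's saturation you invoke at odd/even stages changes nothing, because the escape problem recurs \emph{inside each single stage}: whichever side you close off to a model, its image under an automorphism of $\sea$ extending the current map can leave the other model, so the invariant ``models on both sides, joined by an isomorphism'' cannot be carried through saturation steps alone.

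The known repairs relax your invariant rather than interleave it. One option is to quote Shelah's theorem that $\lambda$-Galois-saturation is equivalent to $\lambda$-model-homogeneity (see e.g.\ \cite{baldwinbook09}); then your step (b) becomes a one-line application of model-homogeneity. The proof of that equivalence is itself a chain construction of length $\LS(K)$, but an \emph{asymmetric} one: the domains form a $\lea$-increasing chain of submodels of $M$ (so saturation of $M$ applies at each step), while the ranges are $K$-embeddings into the monster that are allowed to stick out of $N$; the target model $Q' \lea N$ is captured element by element inside the ranges, and only at the very end does one invert and restrict to $Q'$ to land back inside $M$. The second option --- the one the paper takes, and it is much shorter --- is to dispense with the back-and-forth system entirely: given an $L_{\infty, \LS(K)^+}$-formula $\phi$ and parameters $\ba$ from $M$ of length $\le \LS(K)$, choose $M_0 \lea M$ of size $\LS(K)$ containing $\ba$, invoke uniqueness of Galois-saturated models over $M_0$ to get $f : N \cong_{M_0} M$, and note that an isomorphism fixing $\ba$ preserves satisfaction of every formula whatsoever; hence $M \models \phi[\ba]$ iff $N \models \phi[\ba]$, with no Karp-style characterization needed. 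Your attempt is, in effect, a from-scratch reproof of that uniqueness theorem, and the gap sits precisely where its actual proof does its real work.
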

\begin{proof}[Proof sketch]
  By \cite[Claim 1.7]{sh394}, $K$ is Galois-stable in $\lambda$. Thus it is easy to see that both $M$ and $N$ are Galois-saturated. Now let $\phi$ be an $L_{\infty, \LS (K)^+}$-formula and let $\ba \in \fct{\LS (K)}{|M|}$. Let $M_0 \lea M$ have size $\LS (K)$ and contain $\ba$. By uniqueness of saturated models, there exists $f: N \cong_{M_0} M$. Therefore $M \models \phi[\ba]$ if and only if $N \models \phi[\ba]$.
\end{proof}

The proof of Proposition \ref{easy-ap} uses amalgamation and no maximal models heavily, for example to obtain that $M$ and $N$ are Galois-saturated and to build the isomorphism taking $M$ to $N$ and fixing $M_0$. 

In Chapter IV of \cite{shelahaecbook}, Shelah discusses variations of Proposition \ref{easy-ap} which do \emph{not} assume amalgamation and no maximal models. The change to make is to take $\lambda$ much bigger than $\LS (K)^+$. Shelah shows: 

\begin{fact}\label{syntactic-facts}
  Let $K$ be an AEC and let $\theta$ be an infinite cardinal.

  \begin{enumerate}
    \item \cite[Claim IV.1.12.(1)]{shelahaecbook} If\footnote{Shelah assumes in addition that $\theta > \LS (K)$, but the proof shows that it is not necessary.} $K$ is categorical in a $\lambda = \lambda^{<\theta}$, then for any $M, N \in K_{\ge \lambda}$, $M \lea N$ implies $M \lee_{L_{\infty, \theta}} N$.
    \item \cite[Conclusion IV.2.12.(1)]{shelahaecbook} If $\theta > \LS (K)$ and $K$ is categorical in a $\lambda > 2^{<\theta}$ with $\cf{\lambda} \ge \theta$, then whenever $M, N \in K_{\ge \lambda}$, $M \lea N$ implies $M \lee_{L_{\infty, \theta}} N$.
  \end{enumerate}
\end{fact}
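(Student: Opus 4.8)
The plan is to realize the categoricity model as an Ehrenfeucht--Mostowski model and then reduce the infinitary elementarity of a $\lea$-extension to a back-and-forth analysis of the underlying linear orders. First I would apply Shelah's presentation theorem to write $K$ as the class of $L$-reducts of models of a first-order theory $T$ in an expanded language $L' \supseteq L$ with $|L'| = \LS(K)$, omitting a fixed set of types. A Hanf-number argument together with categoricity in $\lambda$ yields a proper EM template $\Phi$ with $\EM(I, \Phi) \rest L \in K$ for every linear order $I$, and with the model of size $\lambda$ isomorphic to $\EM(I, \Phi) \rest L$ whenever $|I| = \lambda$. The reason for passing to EM models is indiscernibility: the $L_{\infty,\theta}$-type in $\EM(I,\Phi)$ of a tuple built from fewer than $\theta$ indices of $I$ is governed by the quantifier-free (order) type of those indices in $I$. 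Consequently a back-and-forth system of partial order-isomorphisms between two index orders, with domains of size $<\theta$, lifts to an $L_{\infty,\theta}$ back-and-forth system between the corresponding EM models, and so an inclusion $I \lee_{L_{\infty,\theta}} J$ of index orders transports to $\EM(I,\Phi) \rest L \lee_{L_{\infty,\theta}} \EM(J,\Phi) \rest L$.

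Next I would reduce an arbitrary pair $M \lea N$ in $K_{\ge \lambda}$ to the EM case (here, unlike in Proposition \ref{easy-ap}, amalgamation is unavailable, so I cannot simply move models around). The key observation is that every model of $K_{\ge \lambda}$ is, up to $L_{\infty,\theta}$-equivalence, of the form $\EM(J,\Phi) \rest L$: for size $\lambda$ this is exactly categoricity, and a larger model is written as a $\lea$-increasing continuous union of smaller ones, using that $\lee_{L_{\infty,\theta}}$ is preserved under such unions (any instance of an $L_{\infty,\theta}$-formula involves fewer than $\theta$ elements, which lie cofinally low in the chain). Thus it suffices to establish the conclusion when both models are EM models over linear orders, which is what the indiscernibility reduction above handles.

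For part (1) the arithmetic enters in producing index orders with enough homogeneity. Using $\lambda = \lambda^{<\theta}$ I would build a linear order $I$ of size $\lambda$ that is sufficiently $\theta$-saturated: concretely, one arranges a back-and-forth family of partial order-isomorphisms with domains of size $<\theta$ that contains the identity on every subset of ${}^{<\theta}I$ and that can match any $<\theta$-cut, so that $I \lee_{L_{\infty,\theta}} J$ for every relevant extension $J \supseteq I$. The cardinal equation $\lambda = \lambda^{<\theta}$ is precisely what is needed so that all $<\theta$-cut types can be realized inside an order of size $\lambda$. Feeding this through the reduction of the previous paragraph yields $M \lee_{L_{\infty,\theta}} N$.

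For part (2) the hypothesis $\lambda = \lambda^{<\theta}$ is dropped, so one cannot saturate an order of size $\lambda$ outright; instead $\cf{\lambda} \ge \theta$ guarantees that every subset of ${}^{<\theta}I$ of an index order of size $\lambda$ is bounded, hence sits in a proper initial segment with room to move, while $\lambda > 2^{<\theta}$ bounds the number of $<\theta$-cut types that must be matched and so lets a coding or tree construction supply the missing back-and-forth maps. I expect this construction of a sufficiently homogeneous index order under the weaker arithmetic---rather than the EM machinery or the lifting step, which are common to both parts---to be the main obstacle, since it is where the two statements genuinely diverge. A final technical point to watch is the case $\theta \le \LS(K)$ allowed in part (1): there a set of fewer than $\theta$ indices can still generate an EM-submodel of size $\LS(K) \ge \theta$, so the back-and-forth must be phrased in terms of index sets (and formulas with fewer than $\theta$ free variables) rather than literal $<\theta$-sized pieces of the models, with indiscernibility still forcing agreement of the relevant $L_{\infty,\theta}$-types.
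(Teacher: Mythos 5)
The paper never proves this statement: it is imported verbatim as a Fact from Shelah's book (Claim IV.1.12.(1) and Conclusion IV.2.12.(1)), so your attempt can only be measured against Shelah's arguments as the paper describes them --- Section IV.1 (whose theme is indeed EM models and solvability) for part (1), and the \emph{model-theoretic forcing} machinery of Section IV.2 for part (2). Your genuine ingredients for part (1) --- indiscernibility lifting a back-and-forth system of partial order-isomorphisms between $I \subseteq J$ to one between $\EM_{\tau (K)} (I, \Phi)$ and $\EM_{\tau (K)} (J, \Phi)$, with the system indexed by $<\theta$-sized sets of \emph{indices} so that the case $\theta \le \LS (K)$ causes no trouble --- are sound. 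The gap is in your central reduction: from ``every model of $K_{\ge \lambda}$ is, up to $L_{\infty, \theta}$-equivalence, an EM model'' you conclude that it suffices to treat pairs of EM models over nested linear orders. This is a non sequitur. The statement to be proved is a property of the \emph{pair} $(N, M)$, namely that the inclusion is $L_{\infty, \theta}$-elementary, and it is not preserved when $M$ and $N$ are separately replaced by $L_{\infty, \theta}$-equivalent structures: the two equivalences need not be compatible with the embedding of $M$ into $N$. What you would need is an equivalence of pairs, e.g.\ an isomorphism $(N, M) \cong (\EM_{\tau (K)} (J, \Phi), \EM_{\tau (K)} (I, \Phi))$ with $I \subseteq J$. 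Categoricity gives an isomorphism of $N$ onto an EM model but no control over where it sends $M$, and obtaining such control \emph{without amalgamation} is precisely the heart of the problem (compare Proposition \ref{easy-ap}, where amalgamation is exactly what allows one to move the pair around). So the core case of your argument assumes the hard part.

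Two secondary problems. First, the lifting from $K_\lambda$ to $K_{\ge \lambda}$: your justification that $<\theta$-sized tuples ``lie cofinally low in the chain'' fails whenever the chain has cofinality less than $\theta$ (e.g.\ $\|N\|$ singular of cofinality $\aleph_0 < \theta$), where a tuple of size $<\theta$ need not lie in any member. The repair is to present each model as a $<\theta$-directed union of size-$\lambda$ $\lea$-submodels (using the coherence axiom) and run a Tarski--Vaught argument for directed systems; this is standard but is not the argument you gave. Second, for part (2) the construction you yourself identify as ``the main obstacle'' --- building sufficiently homogeneous index orders from $\lambda > 2^{<\theta}$ and $\cf{\lambda} \ge \theta$ --- is left as ``a coding or tree construction,'' i.e.\ a placeholder rather than a proof, and it is also not how Shelah proceeds: Conclusion IV.2.12 comes out of the forcing apparatus of Section IV.2, where $2^{<\theta} < \lambda$ bounds the number of conditions and a downward L\"{o}wenheim--Skolem theorem \emph{for the forcing relation} does the work; the delicate relation between that forcing satisfaction and true $L_{\infty, \theta}$-satisfaction is exactly the subject of the gap this paper identifies in Shelah's Claim IV.2.14. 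In sum: both the pair reduction on which everything rests and the divergent construction needed for part (2) are missing.
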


In fact, Shelah even claims \cite[Conclusion IV.2.14]{shelahaecbook}:

\begin{claim}\label{shelah-claim-0}
  Let $K$ be an AEC categorical in unboundedly many cardinals. For any $\theta$, there exists a cardinal $\mu_0 (\theta)$ such that for any $\mu$ so that $K$ is categorical in $\mu \ge \mu_0 (\theta)$, for any $M, N \in K_{\ge \mu}$, $M \lea N$ implies $M \lee_{L_{\infty, \theta}} N$.
\end{claim}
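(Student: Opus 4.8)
The plan is to deduce the statement from Fact~\ref{syntactic-facts} by transferring the $L_{\infty,\theta}$-elementarity it provides at a large categoricity cardinal down to models of size $\ge \mu$. The soft engine is a downward-transitivity law for $\lee_{L_{\infty, \theta}}$: if $M \lea N \lea P$ with $M \lee_{L_{\infty, \theta}} P$ and $N \lee_{L_{\infty, \theta}} P$, then $M \lee_{L_{\infty, \theta}} N$. This is immediate, since for $\bar{a} \in \fct{<\theta}{|M|}$ and an $L_{\infty,\theta}$-formula $\phi$ one has $M \models \phi[\bar a] \Leftrightarrow P \models \phi[\bar a] \Leftrightarrow N \models \phi[\bar a]$, the first equivalence by $M \lee_{L_{\infty,\theta}} P$ and the second by $N \lee_{L_{\infty,\theta}} P$ (using $\bar a \in \fct{<\theta}{|N|}$). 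Granting this, it suffices, for a given $M \lea N$ in $K_{\ge\mu}$, to find a categoricity cardinal $\lambda$ meeting the hypotheses of Fact~\ref{syntactic-facts}(1) or~(2) together with a commuting square of extensions $M \lea M^\ast \lea N^\ast$ and $M \lea N \lea N^\ast$ with $|M^\ast|, |N^\ast| \ge \lambda$, $M \lee_{L_{\infty,\theta}} M^\ast$, and $N \lee_{L_{\infty, \theta}} N^\ast$. Indeed, Fact~\ref{syntactic-facts} applied at $\lambda$ gives $M^\ast \lee_{L_{\infty, \theta}} N^\ast$; chaining with $M \lee_{L_{\infty,\theta}} M^\ast$ yields $M \lee_{L_{\infty, \theta}} N^\ast$; and the transitivity law applied to $M \lea N \lea N^\ast$ then produces $M \lee_{L_{\infty, \theta}} N$.

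The two elementary legs $M \lee_{L_{\infty,\theta}} M^\ast$ and $N \lee_{L_{\infty,\theta}} N^\ast$ are to be produced with Ehrenfeucht--Mostowski models. Since $K$ has arbitrarily large models, fix by the presentation theorem a template $\Phi$, proper for linear orders, with $\EM_{\tau(K)}(I, \Phi) \in K$ and $\EM_{\tau(K)}(I,\Phi) \lea \EM_{\tau(K)}(J, \Phi)$ for $I \subseteq J$; by categoricity, in each categoricity cardinal $\nu > \LS(K)$ the unique model of size $\nu$ is isomorphic to $\EM_{\tau(K)}(\nu, \Phi)$. The transfer lemma to establish is that $\EM_{\tau(K)}(I, \Phi) \lee_{L_{\infty, \theta}} \EM_{\tau(K)}(J, \Phi)$ whenever $I \subseteq J$ admits a back-and-forth system on $<\theta$-sequences (each $<\theta$-sequence from $J$ being order-isomorphic, over its intersection with $I$, to one from $I$): a Skolem term names finitely many order-indiscernibles, so an $L_{\infty, \theta}$-formula with $<\theta$ parameters constrains only $<\theta$ elements of the index order, whose behaviour is fixed by their order type. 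When $\theta = \aleph_0$ the claim is in fact immediate, since $\lambda = \lambda^{<\aleph_0}$ for every infinite $\lambda$, so Fact~\ref{syntactic-facts}(1) applies verbatim at every categoricity cardinal; the content is therefore concentrated in $\theta > \aleph_0$, where one must manufacture linear orders carrying the configuration $M \lea N$ and possessing sufficiently $<\theta$-homogeneous extensions of size a suitable categoricity cardinal $\lambda$. The admissible $\mu_0(\theta)$ is exactly the threshold making this order-theoretic bookkeeping succeed, and it is here that the arithmetic of Fact~\ref{syntactic-facts} ($\lambda = \lambda^{<\theta}$, or $\cf{\lambda} \ge \theta$ with $\lambda > 2^{<\theta}$) re-enters.

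The main obstacle I foresee is that an arbitrary member of $K_{\ge \mu}$ need not have size a categoricity cardinal, and—absent amalgamation and any control on maximal models—need not be, or embed into, an $\EM$ model, so the legs $M \lee_{L_{\infty,\theta}} M^\ast$ cannot simply be read off the template. Surmounting this requires either reducing to the case where $|M|$ and $|N|$ are themselves categoricity cardinals (a Löwenheim--Skolem step that must be made compatible with $\lee_{L_{\infty,\theta}}$, which is delicate precisely because that compatibility is a weak form of what we are proving) or constructing the commuting square by a resolution of $M$ and $N$ into $\lea$-chains of EM pieces. A second, largely orthogonal difficulty is securing a categoricity cardinal $\lambda \ge \mu$ satisfying Fact~\ref{syntactic-facts}: for $\theta > \aleph_0$ both parts force $\cf{\lambda} \ge \theta$, yet unbounded categoricity by itself does not preclude every categoricity cardinal having small cofinality, in which case the Facts are vacuous and the transfer lemma must carry the entire argument. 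Reconciling these two points—especially the interaction of $\lee_{L_{\infty, \theta}}$ with non-categoricity sizes in the absence of amalgamation—is where I expect the genuine work, and the source of the gap in the original argument, to lie.
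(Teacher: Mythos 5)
Your proposal is not a proof, and you in effect concede this yourself: the two ``obstacles'' you flag at the end are exactly the points where the whole argument would have to take place, and neither is overcome. First, the legs $M \lee_{L_{\infty,\theta}} M^\ast$ and $N \lee_{L_{\infty,\theta}} N^\ast$ of your commuting square are never constructed. Your EM transfer lemma only compares $\EM_{\tau(K)}(I,\Phi)$ with $\EM_{\tau(K)}(J,\Phi)$ for $I \subseteq J$; an arbitrary $M \in K_{\ge \mu}$ need not be isomorphic to, or even $\lea$-embeddable into, an EM model (categoricity holds only in unboundedly many cardinals, not on a tail, and there is no amalgamation to push $M$ into anything), and even if one had $M \lea \EM_{\tau(K)}(J,\Phi)$, upgrading that $\lea$ to $\lee_{L_{\infty,\theta}}$ is precisely the statement being proved, so this step is circular. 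Second, the needed categoricity cardinal may not exist: both clauses of Fact \ref{syntactic-facts} force $\cf{\lambda} \ge \theta$ (note that $\lambda = \lambda^{<\theta}$ already implies this by K\"onig's theorem), and categoricity in unboundedly many cardinals gives no categoricity cardinal of large cofinality --- they could all be singular of cofinality $\aleph_0$. Your fallback, that ``the transfer lemma must carry the entire argument,'' restates the problem rather than solving it. So the reduction via downward transitivity of $\lee_{L_{\infty,\theta}}$ (which is correct, and the easy part, as is the $\theta = \aleph_0$ case) is scaffolding around a hole.

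You should also know how this compares with the paper: the paper does not prove this claim at all. This claim is exactly the statement in whose proof (Shelah's Conclusion IV.2.14) the authors identify a gap --- Shelah argues via model-theoretic forcing, and the unjustified step is the identification of forcing satisfaction with classical $L_{\infty,\theta}$ satisfaction, which is essentially the thing to be proved --- and the authors further report that their own earlier attempted fix was erroneous. So the claim should be treated as open. What the paper actually proves is the implication from this claim to amalgamation, good frames, and stationarity of the categoricity spectrum (Theorem \ref{main-thm-2}), via Lemma \ref{event-synt-lem}, Theorem \ref{kueker-improvement}, and Facts \ref{shelah-ap} and \ref{shelah-categ-transfer}. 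Your instinct about where the genuine difficulty lies is accurate and matches the paper's diagnosis; but locating the difficulty is not the same as resolving it, and a referee should treat your text as a (sensible) research plan, not a proof.
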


Claim \ref{shelah-claim-0} has several applications. For one thing, we can use it to show that for an AEC $K$ categorical in unboundedly many cardinals, there exists $\lambda$ so that $K_{\ge \lambda}$ is nothing but the class of models of a complete $L_{(2^{\LS (K)})^+, \LS (K)^+}$-sentence, ordered by $L_{\infty, \LS (K)^+}$-elementary substructure. This adds to a result of Kueker (\cite[Theorem 7.4]{kueker2008}) which proved this (without the characterization of the ordering) assuming amalgamation and categoricity in a single cardinal of high-enough cofinality\footnote{Kueker also shows \cite[Theorem 7.2]{kueker2008} that $M \lee_{\infty, \LS (K)^+} N$ implies $M \lea N$, but we are interested in the converse here.} .

More importantly, the condition that $M \lea N$ implies $M \lee_{L_{\infty, \theta}} N$ (for an appropriate $\theta$) is crucial in Chapter IV of \cite{shelahaecbook}. There Shelah uses the condition to obtain amalgamation in a single cardinal from categoricity in a suitable cardinal (see \cite[Theorem IV.1.30]{shelahaecbook}). An implicit conclusion of this work (the second part appeared explicitly in earlier versions of \cite{ap-universal-v6}) is:

\begin{claim}\label{shelah-claim}
  If $K$ is an AEC categorical in unboundedly many cardinal, then:

  \begin{enumerate}
    \item $K$ is categorical in a stationary class of cardinals.
    \item There exists a categoricity cardinal $\lambda$ such that $K$ has amalgamation in $\lambda$. In fact, $K$ has a type-full good $\lambda$-frame in $\lambda$ (see \cite[Definition II.2.1]{shelahaecbook}).
  \end{enumerate}
\end{claim}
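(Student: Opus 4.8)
The plan is to prove Claim \ref{shelah-claim} by combining the syntactic characterization from Fact \ref{syntactic-facts} (or more precisely the version stated in Claim \ref{shelah-claim-0}, which we are assuming via the main claim of the abstract) with an Ehrenfeucht-Mostowski / compactness-style argument that transfers structural properties down from large categoricity cardinals. For part (1), I would fix a single categoricity cardinal $\lambda_0$ large enough that the $L_{\infty, \LS(K)^+}$-characterization of $\lea$ holds above $\lambda_0$. The key observation is that once $M \lea N$ is equivalent to $M \lee_{L_{\infty, \LS(K)^+}} N$ for all sufficiently large models, the class $K_{\ge \lambda_0}$ is, up to the ordering, axiomatized by a single $L_{(2^{\LS(K)})^+, \LS(K)^+}$-sentence (as noted in the discussion following Claim \ref{shelah-claim-0}). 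Then I would invoke a Hanf-number / EM-model argument to show that categoricity in one cardinal above this threshold propagates to a stationary class: the set of cardinals in which a complete sentence of infinitary logic with a fixed characterization of substructure can be categorical is either bounded or stationary, and unbounded categoricity rules out the bounded case.

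For part (2), the strategy is to locate a categoricity cardinal $\lambda$ at which amalgamation can be derived and then to build a good $\lambda$-frame. First I would apply \cite[Theorem IV.1.30]{shelahaecbook}, which (as the excerpt notes) uses precisely the condition that $M \lea N$ implies $M \lee_{L_{\infty, \theta}} N$ for a suitable $\theta$ to extract amalgamation in a single cardinal from categoricity. Choosing $\lambda$ from the stationary class produced in part (1), with $\lambda$ large enough (e.g. $\lambda = \lambda^{<\theta}$ for an appropriate $\theta > \LS(K)$, as in Fact \ref{syntactic-facts}.(1)), I would verify that the hypotheses of that theorem are met, yielding amalgamation in $\lambda$. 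The passage from amalgamation in a categoricity cardinal to a type-full good $\lambda$-frame then follows Shelah's construction: categoricity supplies stability and the existence of a superstable-like independence notion, while the definition of a good $\lambda$-frame (\cite[Definition II.2.1]{shelahaecbook}) is verified axiom by axiom, with nonforking defined via the infinitary-syntactic types or via Galois types computed in the saturated model of size $\lambda$.

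The main obstacle I anticipate is the interface between the two parts, namely arranging that the categoricity cardinal $\lambda$ chosen for the frame construction in (2) simultaneously lies in the stationary class of (1), satisfies the cardinal-arithmetic hypotheses needed for the syntactic characterization, and is large enough to trigger \cite[Theorem IV.1.30]{shelahaecbook}. Each of these imposes a constraint on $\lambda$, and I must check their intersection is still stationary (or at least nonempty and stable under the transfer arguments). A secondary difficulty is that the syntactic characterization $M \lea N \iff M \lee_{L_{\infty, \LS(K)^+}} N$ is exactly the content the paper flags as having a gap in Shelah's argument; so the honest formulation is that Claim \ref{shelah-claim} is proved \emph{assuming} the claim of the abstract, and my proof of part (2) must take care to use the characterization only for the specific $\theta$ and $\lambda$ for which it is legitimately available. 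Verifying the good-frame axioms — especially local character and uniqueness of nonforking — is routine given stability in $\lambda$ and the characterization of the ordering, but it is the most calculation-heavy step and where the bulk of the bookkeeping lives.
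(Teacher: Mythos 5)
Your proposal has a genuine gap in each part. For part (1), you lean on a purported dichotomy: ``the set of cardinals in which a complete sentence of infinitary logic with a fixed characterization of substructure can be categorical is either bounded or stationary.'' No such theorem exists; it is essentially the statement to be proved. Unbounded categoricity (plus Hanf-number arguments) gives categoricity cardinals above any given cardinal, but stationarity requires meeting every \emph{club} class, and for that you must produce categoricity at suitable \emph{limit} cardinals. The mechanism the paper uses is Fact \ref{shelah-categ-transfer} (Shelah's \cite[Observation IV.3.5]{shelahaecbook}): if $\seq{\lambda_n : n \le \omega}$ is increasing continuous, $K$ is categorical in each $\lambda_n$ for $n < \omega$, $\lambda_\omega = \beth_{\lambda_\omega}$, and the relevant syntactic characterizability holds, then $K$ is categorical in $\lambda_\omega$. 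Given a club $C$, the paper interleaves members of $C$ with categoricity cardinals (using unboundedness) along a sequence of length $\omega \cdot \omega$; continuity puts each limit point in $C$, the limits are beth fixed points of cofinality $\aleph_0$, and the transfer fact gives categoricity there. Your outline never invokes this transfer, and without it there is no route from unboundedness to stationarity.

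For part (2), your cardinal arithmetic is inconsistent with the theorem you want to apply. Shelah's amalgamation theorem (Fact \ref{shelah-ap}, i.e.\ \cite[Theorem IV.1.30]{shelahaecbook}) produces amalgamation at a cardinal $\lambda$ satisfying $\lambda = \beth_\lambda$ and $\cf{\lambda} = \aleph_0$, lying \emph{below} the solvability/categoricity cardinal $\mu$, with the syntactic characterization assumed for all $\theta < \lambda$ for models of size $\mu$. You instead propose to take $\lambda = \lambda^{<\theta}$ for some $\theta > \LS (K)$; but $\lambda = \lambda^{<\theta}$ forces $\cf{\lambda} \ge \theta > \aleph_0$ (since by K\"onig's theorem $\lambda^{\cf{\lambda}} > \lambda$), so such a $\lambda$ can never satisfy the hypotheses of Fact \ref{shelah-ap}. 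Moreover, the good $\lambda$-frame should not be rebuilt ``axiom by axiom'' --- that would amount to redoing \cite[Theorem IV.4.10]{shelahaecbook}; the paper simply cites it as Fact \ref{shelah-ap}.(2), but that result requires in addition an increasing sequence $\seq{\lambda_n : n < \omega}$ of beth fixed points of cofinality $\aleph_0$ with supremum $\lambda$, a requirement your construction does not arrange, while the paper's $\omega \cdot \omega$-length construction produces it automatically (take the $\lambda_{\omega \cdot n}$). Finally, Fact \ref{shelah-ap} yields its frame on $(K_{\Phi}^\ast)_\lambda$, the class of EM-models; to conclude that the frame lives on $K_\lambda$ itself you need categoricity in $\lambda$, which again is supplied only by the transfer fact. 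The paper thus proves both parts in a single construction inside an arbitrary club, obtaining the stronger conclusion that the class of categoricity cardinals carrying a type-full good frame is stationary.
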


The second part of claim \ref{shelah-claim} has been used by the second author in an early version of his proof of Shelah's eventual categoricity conjecture for universal classes \cite{ap-universal-v6}\footnote{We believe that such claims, obtaining some amalgamation from categoricity, will be central in the resolution of Shelah's categoricity conjecture. In fact, it has been conjectured by Grossberg (see \cite[Conjecture 2.3]{grossberg2002}) that amalgamation should simply follow from categoricity in a high-enough cardinal.}. 

\emph{However}, we have identified a gap in Shelah's proof of Claim \ref{shelah-claim-0} (hence invalidating Claim \ref{shelah-claim}): in \cite[Section IV.2]{shelahaecbook}, Shelah uses model-theoretic forcing and proves \cite[Claim IV.2.13]{shelahaecbook} that it is enough to have a certain downward Löwenheim-Skolem theorem for forcing. Shelah had previously established a downward Löwenheim-Skolem theorem \emph{for $L_{\infty, \theta}$}, and claims without comments (in his proof of Conclusion IV.2.14) that it suffices to use it. However it is not clear that forcing and the classical $L_{\infty, \theta}$ satisfaction relation coincide, in fact this is essentially what Shelah wants to prove.

Here, we explain why Claim \ref{shelah-claim} follows from Claim \ref{shelah-claim-0} and discuss some related questions on the interaction between logics such as $L_{\infty, \theta}$ and AECs (thus this paper can be considered a follow up to Kueker's work \cite{kueker2008})\footnote{In an earlier version of this paper, we claimed to fix Shelah's gap and prove Claim \ref{shelah-claim-0}, but an error was later found: Lemma 2.20 in versions dated prior to Oct. 15, 2015 is false.}.

This paper was written while the second author was working on a Ph.D.\ thesis under the direction of Rami Grossberg at Carnegie Mellon University and he would like to thank Professor Grossberg for his guidance and assistance in his research in general and in this work specifically. We also thank John Baldwin for comments on an early version of this paper.

\section{Syntactically characterizable AECs}\label{syntactic-sec}

To make it easy to speak of the conclusion of Claim \ref{shelah-claim-0}, we give it a name. 

\begin{defin}
  An AEC $\K$ is \emph{$L_{\infty, \theta}$-syntactically characterizable} if whenever $M, N \in \K$, if $M \lea N$ then $M \lee_{L_{\infty, \theta}} N$. We say that $\K$ is \emph{eventually syntactically characterizable} if for every infinite cardinal $\theta$, there exists $\lambda$ such that $\K_{\ge \lambda}$ is $L_{\infty, \theta}$-syntactically characterizable.
\end{defin}

Thus Claim \ref{shelah-claim-0} says that if $K$ is categorical in unboundedly many cardinals, then $K$ is eventually syntactically characterizable. The terminology is justified by the following lemma:

\begin{lem}\label{event-synt-lem}
  Assume that $\K$ is an AEC with $\kappa := \LS (\K)$. If $K$ is $L_{\infty, \kappa^+}$-syntactically characterizable, then there is a formula $\phi \in L_{(2^{\kappa})^+, \kappa^+}$ such that $(K, \lea) = (\Mod (\phi), \lee_{L_{\infty, \kappa^+}})$. Moreover if $K_{<\kappa} = \emptyset$ and $K$ is categorical in $\kappa$, then $\phi$ can be taken to be complete.
\end{lem}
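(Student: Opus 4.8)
The plan is to take $\phi$ to be a disjunction of Scott sentences of the small members of $\K$. First I would recall the generalized Scott analysis: every $L$-structure $M$ of size $\le \kappa$ (where $L$ is the language of $\K$, of size $\le \kappa$) admits a Scott sentence $\phi_M \in L_{(2^\kappa)^+, \kappa^+}$ with the property that for every $L$-structure $N$ one has $N \models \phi_M$ if and only if $N \equiv_{L_{\infty, \kappa^+}} M$. The bound $(2^\kappa)^+$ comes from the fact that there are at most $2^\kappa$ tuples of length $\le \kappa$ in $M$, so the approximating hierarchy $\langle \phi^\alpha_{\ba} : \alpha \in \OR \rangle$ stabilizes at some $\alpha_M < (2^\kappa)^+$ and each conjunction or disjunction involved has size $\le 2^\kappa$. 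Since there are at most $2^\kappa$ isomorphism types in $\K_{\le \kappa}$, I would fix a set $R$ of representatives, one from each type, and set $\phi := \bigvee_{M \in R} \phi_M$. As $|R| \le 2^\kappa$, indeed $\phi \in L_{(2^\kappa)^+, \kappa^+}$.

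The inclusion $\K \subseteq \Mod (\phi)$ is the easy half, and it is where the hypothesis enters most directly. Given $N \in \K$, the Löwenheim--Skolem axiom yields $M \lea N$ with $M \in \K_{\le \kappa}$, so syntactic characterizability gives $M \lee_{L_{\infty, \kappa^+}} N$, whence $N \equiv_{L_{\infty, \kappa^+}} M$; since $M$ is isomorphic to some $M' \in R$, we get $N \models \phi_{M'}$. For the ``moreover'' clause I would note that if $\K_{<\kappa} = \emptyset$ and $\K$ is categorical in $\kappa$, then $\K_{\le \kappa} = \K_\kappa$ has a single isomorphism type, so $R = \{M_0\}$ and $\phi = \phi_{M_0}$. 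Any two models of $\phi_{M_0}$ are then $L_{\infty, \kappa^+}$-equivalent, and since $L_{(2^\kappa)^+, \kappa^+} \subseteq L_{\infty, \kappa^+}$, the sentence $\phi$ decides every $L_{(2^\kappa)^+, \kappa^+}$-sentence; as it has a model, it is complete.

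The substance is the reverse inclusion $\Mod (\phi) \subseteq \K$ together with the nontrivial direction of the ordering, namely $M \lee_{L_{\infty, \kappa^+}} N \Rightarrow M \lea N$ for $M, N \in \K$. My strategy would be to exhibit any $N \models \phi$ as a $\lea$-directed union of small members of $\K$ and to invoke the (directed) union axiom of AECs. Concretely, $N \equiv_{L_{\infty, \kappa^+}} M_0$ for some $M_0 \in R$, so there is a back-and-forth system $I$ of partial isomorphisms $M_0 \to N$ of size $\le \kappa$ with the $\le \kappa$-extension property. Given $A \subseteq N$ with $|A| \le \kappa$, I would build an increasing $\omega$-chain $p_0 \subseteq p_1 \subseteq \cdots$ in $I$ with $A \subseteq \ran{p_0}$, alternately using the back property to keep $A$ in the range and the Löwenheim--Skolem axiom inside $M_0$ to arrange that each $\dom{p_{n+1}}$ is a $\lea$-submodel of $M_0$. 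Then $p := \bigcup_n p_n$ is an isomorphism of $N \rest \ran{p}$ onto $B := \dom{p} \lea M_0$ (the latter a $\lea$-increasing union of $\lea$-submodels of $M_0$), so $N \rest \ran{p}$ is, up to isomorphism, a member of $\K$ of size $\le \kappa$ containing $A$. Because such pieces arise as images of $\lea$-submodels of $M_0$, the AEC axiom that isomorphisms preserve $\lea$ transports the $\lea$-order from submodels of $M_0$ to the corresponding substructures of $N$; a compatible family of such partial isomorphisms then yields a $\lea$-directed family of small members of $\K$ with union $N$, and the union axiom gives $N \in \K$. The converse of the ordering would be obtained by the same machinery applied below a fixed $\lee_{L_{\infty, \kappa^+}}$-extension, with syntactic characterizability guaranteeing that the small pieces are $L_{\infty, \kappa^+}$-elementary and hence mutually $\lea$-coherent.

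The hard part will be exactly this coherence/resolution step. The tension is that the domains of the maps in $I$ live inside $M_0$, which has size only $\le \kappa$, whereas $N$ may be much larger, so no single chain of images can exhaust $N$; one must instead organize the many partial isomorphisms of $I$ into a compatible directed family, using syntactic characterizability to align $L_{\infty, \kappa^+}$-elementarity with $\lea$ so that overlapping pieces stay $\lea$-comparable. I expect this bookkeeping, rather than any single conceptual ingredient, to be the crux of the argument.
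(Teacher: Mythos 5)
Your construction of $\phi$ as a disjunction of Scott sentences of the members of $\K_{\le\kappa}$, your proof of $K \subseteq \Mod(\phi)$ via the L\"{o}wenheim--Skolem axiom plus syntactic characterizability, and your completeness argument for the ``moreover'' clause all coincide with the paper's proof. The divergence is in the other half, and there your proposal has a genuine gap. The paper does not prove the two transfer facts at all: it quotes them. Namely (i) if $M \in K$ and $M \equiv_{L_{\infty,\kappa^+}} N$ then $N \in K$, and (ii) if $M \lee_{L_{\infty,\kappa^+}} N$ and $N \in K$ then $M \in K$ and $M \lea N$; these are \cite[Claim IV.1.10.(3),(4)]{shelahaecbook} and \cite[Theorem 7.2]{kueker2008}, they hold for \emph{every} AEC, and they need no syntactic characterizability. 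You instead set out to reprove them inline and stop exactly at the crux: the ``coherence/resolution step'' you defer as bookkeeping is not bookkeeping --- it is the entire content of the cited theorems. Your $\omega$-chain argument only shows that every $\le\kappa$-sized subset of $N$ is contained in a substructure of $N$ isomorphic to a member of $K$; in a general AEC that statement does not imply $N \in K$. To invoke the union axiom you need the pieces to form a $\lea$-directed system, and a relation $N\rest A \lea N\rest B$ between two substructures of $N$ (whose membership in $K$ is not yet known) can only be witnessed by transporting $\lea$ along a \emph{single} partial isomorphism covering both pieces at once, via the coherence axiom; separate, unrelated back-and-forth maps into $M_0$ give no way to compare the pieces. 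Organizing this is precisely what Kueker's club-filter machinery and Shelah's proof of IV.1.10 accomplish, and neither is routine.

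There is also a structural misdiagnosis worth flagging. For the membership half, the case you call hard --- ``$N$ may be much larger than $M_0$'' --- cannot occur: under the standard semantics of $L_{\infty,\kappa^+}$ (formulas with $\le\kappa$ free variables, back-and-forth systems with the $\le\kappa$-extension property), equivalence to a structure of size $\le\kappa$ already implies isomorphism. Indeed, one application of the forth property yields a map in the system with domain all of $M_0$, and then the back property forces its range to be all of $N$, since the map can no longer be properly extended. This is exactly the generalized Scott isomorphism theorem the paper cites. So $\Mod(\phi) \subseteq K$ follows from closure under isomorphism alone, with no directed-union argument. What cannot be dispatched this way is the ordering statement (ii), which the lemma genuinely needs because it asserts an equality of \emph{ordered} classes $(K, \lea) = (\Mod(\phi), \lee_{L_{\infty,\kappa^+}})$: there $M$ and $N$ may both be large, there is no small anchor $M_0$, and your machinery of chains of partial maps into a fixed $\le\kappa$-sized model does not apply; your closing sentence (``the same machinery applied below a fixed extension'') is a gesture at exactly the point where the theorem lives. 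Either cite Shelah/Kueker for (i) and (ii), as the paper does, or supply an actual proof of them; as written, the proposal is incomplete at its load-bearing step.
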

\begin{proof}
  By Shelah \cite[Claim IV.1.10.(4)]{shelahaecbook} or Kueker \cite[Theorem 7.2.(a)]{kueker2008}), we have that if $M \in K$ and $M \equiv_{L_{\infty, \kappa^+}} N$, then $N \in K$ (this does not need syntactic characterizability). For for each $M \in \K_{\le \kappa}$, let $T_M$ be the complete $L_{\infty, \kappa^+}$-theory of $M$, and let $\phi_M \in L_{(2^{\kappa})^+, \kappa^+}$ code it (as in the proof of Scott's isomorphism theorem, see \cite[Corollary 5.3.33]{dickmann-book} or \cite[Claim IV.2.8.(2)]{shelahaecbook}). If $N \in K$, find $M \lea N$ with $M \in K_{\le \kappa}$. We know that $M \lee_{L_{\infty, \kappa^+}} N$ and $M \models T_{M}$ so also $N \models T_{M}$. Therefore we can take $\phi := \bigvee_{M \in \K_{\le \kappa}} \phi_M$.

  Finally (see Shelah \cite[Claim IV.1.10.(3)]{shelahaecbook} or Kueker \cite[Theorem 7.2.(b)]{kueker2008}), if $M \lee_{L_{\infty, \kappa^+}} N$ and $N \in K$, then $M \in K$ and $M \lea N$. This completes the proof.
\end{proof}

For the rest of this section, we give some properties of categorical syntactically characterizable AECs. In \cite[Theorem 7.4]{kueker2008}, Kueker proved:

\begin{fact}
  Let $K$ be an AEC with amalgamation, joint embedding, and no maximal models. Let $\kappa := \LS (K)$. Let $\lambda \ge \LS (K)$ be a cardinal such that $\cf{\lambda} > \kappa$. Then there is a complete $\phi \in L_{(2^{\kappa})^+, \kappa^+}$ such that $K_{\ge \lambda} = (\Mod (\phi))_{\ge \lambda}$.
\end{fact}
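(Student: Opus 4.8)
The plan is to manufacture $\phi$ as a single complete $L_{\infty,\kappa^+}$-theory-in-a-sentence for the unique model of size $\lambda$, and then to check that its models of size $\ge \lambda$ are exactly $K_{\ge\lambda}$. Write $M_\lambda$ for the model of size $\lambda$ (unique since, as in Kueker's formulation and the standing hypotheses of this section, $K$ is categorical in $\lambda$). First I would record the standard fact that, under amalgamation, joint embedding, no maximal models, and $\cf{\lambda}>\kappa$, categoricity in $\lambda$ forces $M_\lambda$ to be saturated: categoricity yields Galois-stability in every $\chi$ with $\kappa \le \chi < \lambda$, so saturated models of size $\lambda$ exist, and the high cofinality makes saturation persist through the union building them, forcing $M_\lambda$ itself to be that model.

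To keep $\phi$ in the small logic $L_{(2^\kappa)^+,\kappa^+}$ --- the full Scott sentence of $M_\lambda$ would a priori lie in $L_{(\lambda^\kappa)^+,\kappa^+}$ --- I would apply Shelah's downward Löwenheim--Skolem theorem for $L_{\infty,\kappa^+}$ (the one recalled in the introduction) to produce $M^\ast \lee_{L_{\infty,\kappa^+}} M_\lambda$ with $|M^\ast|\le 2^{<\kappa^+}=2^\kappa$. Then $M^\ast\equiv_{L_{\infty,\kappa^+}}M_\lambda$, and since $|M^\ast|\le 2^\kappa$ the complete $L_{\infty,\kappa^+}$-theory of $M^\ast$ is coded by a single $\phi\in L_{(2^\kappa)^+,\kappa^+}$ exactly as in the proof of Lemma \ref{event-synt-lem} (Scott's isomorphism theorem, \cite[Corollary 5.3.33]{dickmann-book} or \cite[Claim IV.2.8.(2)]{shelahaecbook}). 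By construction $\phi$ is complete and $N\models\phi$ iff $N\equiv_{L_{\infty,\kappa^+}}M_\lambda$.

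The desired equality now splits into two inclusions. The inclusion $(\Mod(\phi))_{\ge\lambda}\subseteq K$ is immediate: if $N\models\phi$ then $N\equiv_{L_{\infty,\kappa^+}}M_\lambda\in K$, so $N\in K$ by the $L_{\infty,\kappa^+}$-closure of AECs (\cite[Claim IV.1.10.(4)]{shelahaecbook} or \cite[Theorem 7.2.(a)]{kueker2008}) already used in Lemma \ref{event-synt-lem}. For $K_{\ge\lambda}\subseteq\Mod(\phi)$, a model of size exactly $\lambda$ is isomorphic to $M_\lambda$, hence $L_{\infty,\kappa^+}$-equivalent to it; and for $M\in K$ of size $\mu>\lambda$ I would choose $M'\lea M$ of size $\lambda$ by Löwenheim--Skolem, observe $M'\cong M_\lambda$, and reduce to proving $M'\lee_{L_{\infty,\kappa^+}}M$, that is, that $K_{\ge\lambda}$ is $L_{\infty,\kappa^+}$-syntactically characterizable.

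This upward characterizability is the main obstacle. A quick route is Fact \ref{syntactic-facts}.(2) with $\theta:=\kappa^+$, which applies once $\cf{\lambda}\ge\kappa^+$ (i.e. $\cf{\lambda}>\kappa$) and $\lambda>2^{<\kappa^+}=2^\kappa$; under the bare hypothesis $\cf{\lambda}>\kappa$ one instead builds the witnessing back-and-forth by hand, in the spirit of Proposition \ref{easy-ap}. Working in a monster model $\sea$, the natural system is the set of $\le\kappa$-sized partial Galois-elementary maps anchored at $M'$. The forth direction is supplied by the saturation of $M'$, which realizes every Galois type over its $\lea$-submodels of size $\le\kappa$. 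The hard direction requires realizing, inside the possibly non-saturated $M$, a Galois type over a $\le\kappa$-subset of $M$ that is a conjugate of a realized type; equivalently, that every member of $K_{\ge\lambda}$ is $\kappa^+$-saturated. This is exactly where $\cf{\lambda}>\kappa$ is essential --- it forces the saturated $\lambda$-model to absorb the relevant $\le\kappa$ parameters together with the type into a single $\lea$-submodel --- and verifying it, together with the coherence of these maps into a genuine back-and-forth witnessing $M'\lee_{L_{\infty,\kappa^+}}M$, is the technical heart of the argument.
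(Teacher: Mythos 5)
A preliminary remark: the paper itself gives no proof of this statement --- it is quoted as a Fact, citing \cite[Theorem 7.4]{kueker2008} --- so your proposal can only be measured against Kueker's own argument and the machinery this paper builds around it. You were right to restore the hypothesis, omitted in the quoted statement, that $K$ is categorical in $\lambda$; without it the statement is not what Kueker proves. The genuine gap in your proposal is the step producing the small sentence: the appeal to a ``downward L\"owenheim--Skolem theorem for $L_{\infty,\kappa^+}$'' yielding $M^\ast \lee_{L_{\infty,\kappa^+}} M_\lambda$ with $|M^\ast| \le 2^\kappa$. No such theorem holds for arbitrary structures: in a well-ordering every element is $L_{\infty,\omega}$-definable (by induction on its position, via $\phi_\alpha(x) := \forall y\,(y < x \leftrightarrow \bigvee_{\beta<\alpha}\phi_\beta(y))$), so a well-order has no proper $L_{\infty,\omega}$-elementary --- a fortiori no proper $L_{\infty,\kappa^+}$-elementary --- substructure at all; in particular $((2^\kappa)^+,<)$ has no $L_{\infty,\kappa^+}$-elementary substructure of size $\le 2^\kappa$. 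Hence any correct downward LS statement must carry hypotheses. The introduction of this paper mentions Shelah's downward LS result only in the course of describing the \emph{gap} in Chapter IV of \cite{shelahaecbook}, not as a citable black box, and the hypotheses under which Shelah actually works (solvability together with cardinal arithmetic of the shape $\lambda = \lambda^{<\theta}$ or $\lambda > 2^{<\theta}$, as reflected in Fact \ref{syntactic-facts}) are nowhere verified in your argument. Worse, the step is essentially circular: producing \emph{any} structure of size $\le 2^\kappa$ that is $L_{\infty,\kappa^+}$-equivalent to $M_\lambda$ is, modulo the routine parts of your proof, equivalent to the theorem itself (given the theorem, apply the standard downward LS for the single sentence $\phi \in L_{(2^\kappa)^+,\kappa^+}$ inside $M_\lambda$; conversely, given such a structure, its Scott sentence is the desired $\phi$). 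Kueker obtains the $(2^\kappa)^+$ bound by an entirely different mechanism, namely his calculus of $\kappa$-sized approximations and the club filter on $P_{\kappa^+}(M)$, where $2^\kappa$ enters as the number of isomorphism types of structures of size $\le\kappa$, not as the cardinality of a small $L_{\infty,\kappa^+}$-elementary submodel of $M_\lambda$.

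Two secondary points. First, for singular $\lambda$ your claim that $M_\lambda$ is fully Galois-saturated overreaches: Galois-stability in every $\chi \in [\kappa,\lambda)$ plus the chain argument gives only $\cf{\lambda}$-saturation; this is harmless here, since only $\kappa^+$-saturation is used. Second, the part you defer as ``the technical heart'' --- that $M' \lea M$ with $M', M \in K_{\ge \lambda}$ implies $M' \lee_{L_{\infty,\kappa^+}} M$ --- is in fact the completable half of the argument: every member $N$ of $K_{\ge\lambda}$ is $\kappa^+$-saturated (any $\le\kappa$-sized $\lea$-submodel of $N$ sits inside some $M_1 \lea N$ of size $\lambda$, and $M_1 \cong M_\lambda$); under amalgamation, $\kappa^+$-saturation is equivalent to $\kappa^+$-model-homogeneity; and then the family of isomorphisms between $\le\kappa$-sized $\lea$-submodels of $M'$ and $\le\kappa$-sized $\lea$-submodels of $M$ is a Karp back-and-forth system containing every $\le\kappa$-sized restriction of the identity, which is exactly the criterion for $M' \lee_{L_{\infty,\kappa^+}} M$. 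So the back-and-forth half of your plan can be pushed through; what cannot, as written, is the compression of $\phi$ into $L_{(2^\kappa)^+,\kappa^+}$, and that is the actual content of Kueker's theorem beyond syntactic characterizability.
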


Using Fact \ref{syntactic-facts}, we can remove amalgamation, joint embedding, and no maximal models from Kueker's result (assuming in addition that $\lambda > 2^{\LS (\K)}$). Moreover we can say what the ordering on $K$ looks like:

\begin{thm}\label{kueker-improvement}
  Let $K$ be an AEC and let $\kappa := \LS (K)$. If $K$ is categorical in a $\lambda > 2^\kappa$ with $\cf{\lambda} > \kappa$, then there exists a complete $\phi \in L_{(2^{\kappa})^+, \kappa^+}$ such that $K_{\ge \lambda} = (\text{Mod} (\phi))_{\ge \lambda}$ and for $M, N \in K_{\ge \lambda}$, $M \lea N$ if and only if $M \lee_{L_{\infty, \kappa^+}} N$.
\end{thm}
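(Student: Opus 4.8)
The plan is to derive everything from Fact \ref{syntactic-facts}(2) applied with $\theta = \kappa^+$, together with the two general facts about $L_{\infty, \kappa^+}$ and AECs that are already invoked in the proof of Lemma \ref{event-synt-lem}. First I would check that the hypotheses of Fact \ref{syntactic-facts}(2) hold for $\theta = \kappa^+$: we have $\theta = \kappa^+ > \kappa = \LS(K)$, the requirement $\lambda > 2^{<\theta} = 2^{\kappa}$ is exactly part of the hypothesis, and $\cf{\lambda} \ge \theta = \kappa^+$ is precisely $\cf{\lambda} > \kappa$. Thus Fact \ref{syntactic-facts}(2) yields that $K_{\ge \lambda}$ is $L_{\infty, \kappa^+}$-syntactically characterizable: for all $M, N \in K_{\ge \lambda}$, $M \lea N$ implies $M \lee_{L_{\infty, \kappa^+}} N$. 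This already furnishes the forward direction of the desired ordering equivalence.

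Next I would produce the sentence $\phi$. By categoricity, fix the unique (up to isomorphism) model $M_\lambda \in K_\lambda$, let $T$ be its complete $L_{\infty, \kappa^+}$-theory, and let $\phi \in L_{(2^\kappa)^+, \kappa^+}$ code $T$ exactly as in the proof of Lemma \ref{event-synt-lem} (Scott's isomorphism theorem). Since every model of $\phi$ is $L_{\infty, \kappa^+}$-equivalent to $M_\lambda$, the sentence $\phi$ is automatically complete; note that here completeness comes for free from categoricity in $\lambda$, so — unlike in the ``moreover'' clause of Lemma \ref{event-synt-lem} — we need neither $K_{<\kappa} = \emptyset$ nor categoricity in $\kappa$.

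To establish $K_{\ge \lambda} = (\Mod(\phi))_{\ge \lambda}$ I would argue both inclusions. For $K_{\ge \lambda} \subseteq \Mod(\phi)$: given $N \in K_{\ge \lambda}$, downward Löwenheim-Skolem yields $M' \lea N$ with $|M'| = \lambda$ (using $\LS(K) = \kappa < \lambda \le |N|$); categoricity gives $M' \cong M_\lambda$; and syntactic characterizability gives $M' \lee_{L_{\infty, \kappa^+}} N$, whence $N \equiv_{L_{\infty, \kappa^+}} M' \cong M_\lambda$, so $N \models \phi$. For the reverse inclusion: if $N \models \phi$ with $|N| \ge \lambda$, then $N \equiv_{L_{\infty, \kappa^+}} M_\lambda$ with $M_\lambda \in K$, so $N \in K$ by the closure fact used in Lemma \ref{event-synt-lem} (that $L_{\infty, \kappa^+}$-equivalence to a member of $K$ forces membership in $K$), and hence $N \in K_{\ge \lambda}$.

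Finally, for the backward direction of the ordering equivalence: given $M, N \in K_{\ge \lambda}$ with $M \lee_{L_{\infty, \kappa^+}} N$, the other fact recalled in the proof of Lemma \ref{event-synt-lem} (if $M \lee_{L_{\infty, \kappa^+}} N$ and $N \in K$ then $M \lea N$) applies since $N \in K$, giving $M \lea N$. The argument is essentially an assembly, so there is no single deep obstacle; the only points requiring care are the bookkeeping around coding the complete $L_{\infty, \kappa^+}$-theory into a single $L_{(2^\kappa)^+, \kappa^+}$-sentence, and the verification that the cardinal hypotheses of Fact \ref{syntactic-facts}(2) translate correctly under $\theta = \kappa^+$. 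The conceptual heart — removing amalgamation, joint embedding, and no-maximal-models from Kueker's theorem — is entirely carried by Fact \ref{syntactic-facts}(2), which we take as given.
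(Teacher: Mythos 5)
The ordering half of your argument is fine and matches the paper: your cardinal bookkeeping for Fact~\ref{syntactic-facts}(2) with $\theta = \kappa^+$ is correct, and the backward direction via the fact that $M \lee_{L_{\infty, \kappa^+}} N \in K$ implies $M \lea N$ is exactly how the paper (through Lemma~\ref{event-synt-lem}) handles it. The genuine gap is in your construction of $\phi$. You propose to code the complete $L_{\infty, \kappa^+}$-theory $T$ of the categoricity model $M_\lambda$ by a sentence of $L_{(2^\kappa)^+, \kappa^+}$ ``exactly as in the proof of Lemma~\ref{event-synt-lem}.'' But the coding used there (Scott's isomorphism theorem, Dickmann's Corollary 5.3.33) applies only to structures of size at most $\kappa$; that is precisely why Lemma~\ref{event-synt-lem} restricts attention to $M \in K_{\le \kappa}$ and needs $\kappa = \LS(K)$ to run downward L\"owenheim-Skolem. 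Your $M_\lambda$ has size $\lambda > 2^\kappa$, and for a structure of that size the Scott analysis only produces a sentence in $L_{\mu, \kappa^+}$ for a $\mu$ depending on $\lambda$ (roughly $(2^{\lambda^{\kappa}})^+$), not in $L_{(2^\kappa)^+, \kappa^+}$. The implicit general principle you are using --- that the complete $L_{\infty, \kappa^+}$-theory of an arbitrary structure is captured by a single $L_{(2^\kappa)^+, \kappa^+}$-sentence --- is false: distinct well-orders are pairwise non-$L_{\infty, \kappa^+}$-equivalent, so there are proper-class-many equivalence classes but only set-many candidate sentences. Since the inclusion $(\Mod(\phi))_{\ge \lambda} \subseteq K_{\ge \lambda}$ and the completeness of $\phi$ in your argument both rest on ``$N \models \phi$ implies $N \equiv_{L_{\infty, \kappa^+}} M_\lambda$,'' the entire sentence-producing half of your proof is unsupported.

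Note also that you cannot repair this by coding a small model instead: if $M_0 \lea M_\lambda$ has size $\le \kappa$, syntactic characterizability (which holds only for models of size $\ge \lambda$) gives no relation between the theories of $M_0$ and $M_\lambda$, and in general they differ. The AEC of pure sets ($\LS = \aleph_0$, categorical in every infinite cardinal, satisfying all the hypotheses) shows this concretely: no countable set is $L_{\infty, \omega_1}$-equivalent to an uncountable one, yet the theorem's conclusion holds there with $\phi$ saying ``every countable sequence omits some element,'' a sentence that is not the Scott sentence of any one model. So the required $\phi$ must come from a mechanism that exploits categoricity and the AEC axioms globally --- this is what Kueker's approximation machinery (or Shelah's corresponding claims) provides --- rather than from coding the theory of a single large model. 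For what it is worth, the paper's own two-line proof is thin at exactly the same point: applied literally to $K_{\ge \lambda}$, whose L\"owenheim-Skolem number is $\lambda$, Lemma~\ref{event-synt-lem} would require $L_{\infty, \lambda^+}$-syntactic characterizability and would only yield a complete sentence in $L_{(2^\lambda)^+, \lambda^+}$. Your proposal makes that hidden difficulty explicit, but it does not close it.
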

\begin{proof}
  By Fact \ref{syntactic-facts}, $\K_{\ge \lambda}$ is $L_{\infty, \kappa^+}$-syntactically characterizable. Now apply Lemma \ref{event-synt-lem}.
\end{proof}

\subsection{Shelah's amalgamation theorem}

In Chapter IV of his book on AECs \cite{shelahaecbook}, Shelah proves, assuming eventual syntactic characterizability, that categoricity in a high-enough cardinal implies amalgamation in a specific cardinal below the categoricity cardinal. For what follows, we assume some familiarity with Ehrenfeucht-Mostowski models, see for example \cite[Section 6.2]{baldwinbook09} or \cite[Definition IV.0.8]{shelahaecbook}. Recall: 

\begin{defin}
  Let $K$ be an AEC. We say that $\Phi$ is an \emph{EM blueprint for $K$} if $\Phi$ is a set of quantifier-free types proper for linear orders in a vocabulary $\tau (\Phi) \supseteq \tau (K)$, and (writing $\EM (I, \Phi)$ for the $\tau (\Phi)$-structure generated by the linear order $I$ and $\Phi$ and $\EM_{\tau (K)} (I, \Phi)$ for $\EM (I, \Phi) \rest \tau (K)$):

  \begin{enumerate}
    \item For any linear order $I$, $\EM_{\tau (K)} (I, \Phi) \in K$ and $\|\EM (I, \Phi)\| = |I| + |\tau'| + \LS (K)$.
    \item For any linear orders $I$, $J$, if $I \subseteq J$, then $\EM_{\tau (K)} (I, \Phi) \lea \EM_{\tau (K)} (J, \Phi)$.
  \end{enumerate}
\end{defin}

From the presentation theorem, we get (see e.g.\ \cite[Claim 0.6]{sh394}):

\begin{fact}
  If an AEC $K$ has arbitrarily large models, then there exists an EM blueprint $\Phi$ for $K$ with $|\tau (\Phi)| = \LS (K)$. 
\end{fact}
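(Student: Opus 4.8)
The plan is to combine Shelah's presentation theorem with the Ehrenfeucht--Mostowski construction for classes omitting types (Morley's method). First I would invoke the presentation theorem (see \cite[Section 6.2]{baldwinbook09} or the original in \cite{shelahaecbook}): since $K$ is an AEC with $\LS (K)$, there is a vocabulary $\tau^\ast \supseteq \tau (K)$ with $|\tau^\ast| \le \LS (K)$, a first-order $\tau^\ast$-theory $T^\ast$ which we may take to be universal, and a set $\Gamma$ of quantifier-free $\tau^\ast$-types such that $K$ is exactly the class of reducts $M^\ast \rest \tau (K)$ where $M^\ast \models T^\ast$ and $M^\ast$ omits every type in $\Gamma$. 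Moreover $\lea$ is witnessed by $\tau^\ast$-substructure: if $M^\ast \subseteq N^\ast$ are such models, then $M^\ast \rest \tau (K) \lea N^\ast \rest \tau (K)$. Because $K$ has arbitrarily large models, this associated $\mathrm{PC}$-class has $\tau^\ast$-models of $T^\ast$ omitting $\Gamma$ in every sufficiently large cardinality.

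Next I would extract the blueprint by the standard Hanf-number argument. Fix a $\tau^\ast$-model $M^\ast$ of $T^\ast$ omitting $\Gamma$ of cardinality at least $\beth_{(2^{\LS (K)})^+}$, together with a linearly ordered subset of that size. Iterating the Erd\H{o}s--Rado theorem (equivalently, applying Morley's omitting-types method, see \cite[Section 6.2]{baldwinbook09}) produces an infinite sequence of $\tau^\ast$-order-indiscernibles whose increasing tuples realize, for each finite length, a single fixed quantifier-free $\tau^\ast$-type. Let $\Phi$ be the resulting set of quantifier-free types. By order-indiscernibility it is proper for linear orders, and $\tau (\Phi) = \tau^\ast$, so $|\tau (\Phi)| = \LS (K)$ as required.

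It then remains to verify the two clauses in the definition of an EM blueprint. For a linear order $I$, let $\EM (I, \Phi)$ be the $\tau^\ast$-structure generated by a $\Phi$-indiscernible sequence $\seq{a_i : i \in I}$; every element is a $\tau^\ast$-term in finitely many of the $a_i$, so $\|\EM (I, \Phi)\| = |I| + \LS (K)$, and since $T^\ast$ is universal, $\EM (I, \Phi) \models T^\ast$. The crucial point is omission of $\Gamma$: any finite tuple from $\EM (I, \Phi)$ is a tuple of $\tau^\ast$-terms evaluated at the indiscernibles, and by the definition of $\Phi$ its quantifier-free type equals one realized in $M^\ast$; since $M^\ast$ omits $\Gamma$, so does $\EM (I, \Phi)$. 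Hence $\EM_{\tau (K)} (I, \Phi) \in K$, giving clause (1). For clause (2), if $I \subseteq J$, then the indiscernible sequence for $I$ is a subsequence of that for $J$, whence $\EM (I, \Phi) \subseteq \EM (J, \Phi)$ as $\tau^\ast$-structures; the presentation theorem's correspondence between $\lea$ and $\tau^\ast$-substructure then yields $\EM_{\tau (K)} (I, \Phi) \lea \EM_{\tau (K)} (J, \Phi)$.

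The main obstacle is precisely the omission of $\Gamma$ in the EM models: indiscernibility directly controls only the quantifier-free types of the generating sequence, so one must propagate this to the omission of the quantifier-free types in $\Gamma$ for \emph{all} tuples of terms, not merely for the generators themselves. This is exactly what the Hanf-number bound $\beth_{(2^{\LS (K)})^+}$ secures: it guarantees a model omitting $\Gamma$ large enough that order-indiscernibles can be extracted while preserving omission, after which the term-closure argument above transfers the property to every $\EM (I, \Phi)$.
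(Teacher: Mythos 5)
Your proposal is correct and is exactly the standard argument behind the paper's citation (Shelah's Claim 0.6 of \cite{sh394}): the presentation theorem turns $K$ into a $\mathrm{PC}$-class given by a theory and quantifier-free types to omit, and Morley's omitting-types method (iterated Erd\H{o}s--Rado on a model above the Hanf number) extracts the blueprint, with omission of $\Gamma$ propagating to all $\EM$-models because quantifier-free types of tuples of terms are determined by those of the generators. The paper itself gives no proof, but your route coincides with the proof of the result it cites, including the key point that the $\lea$-relation for clause (2) comes from the substructure correspondence in the presentation theorem.
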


\begin{defin}\label{k-ast-def}
  Let $K$ be an AEC and let $\Phi$ be an EM blueprint for $K$. We write $K_{\Phi}^\ast$ (or just $K^\ast$ when $\Phi$ is clear from context) for the class of $M \in K$ such that there exists a linear order $I$ with $M \cong \EM_{\tau (K)} (I, \Phi)$.
\end{defin}

If in addition $K$ is categorical in $\lambda \ge \LS (K)$, then in particular all the $\EM$-models of size $\lambda$ must be isomorphic. In fact, the class of $\EM$-models of size $\lambda$ will generate an AEC. This property is isolated by Shelah in \cite[Chapter IV]{shelahaecbook} and called \emph{solvability}. We define only the weaker notion of pseudo-solvability here (regular solvability asks in addition that the $\EM$-model of size $\lambda$ be superlimit, so in particular every $M \in K_\lambda$ embeds into an $\EM$-model of size $\lambda$).

\begin{defin}[Definition IV.1.4.(3) in \cite{shelahaecbook}]\label{solvability-def}
  Let $K$ be an AEC and let $\Phi$ be an EM blueprint for $K$. We say that $(K, \Phi)$ is \emph{pseudo $(\lambda, \theta)$-solvable} if:

  \begin{enumerate}
    \item $\LS (K) \le \theta \le \lambda$. $|\Phi| \le \theta$.
    \item $K_{\Phi}^\ast$ is categorical in $\lambda$.
    \item If $\delta < \lambda^+$ and $\seq{M_i : i < \delta}$ are increasing in $K_{\Phi}^\ast$ and have size $\lambda$, then $\bigcup_{i < \delta} M_i$ is in $K_{\Phi}^\ast$.
  \end{enumerate}

  We say that $K$ is \emph{pseudo $(\lambda, \theta)$-solvable} if $(K, \Phi)$ is pseudo $(\lambda, \theta)$-solvable for some EM blueprint $\Phi$.
\end{defin}
\begin{remark}
  If $K$ has arbitrarily large models, $\Phi$ is an EM blueprint for $K$, and $K$ is categorical in a $\lambda \ge \LS (K) + |\tau (\Phi)|$, then $(K, \Phi)$ is pseudo $(\lambda, |\tau (\Phi)| + \LS (K))$-solvable.
\end{remark}

\begin{fact}[Shelah's amalgamation theorem]\label{shelah-ap}
  Let $K$ be an AEC and let $\lambda, \mu$ be cardinals such that $\LS (K) < \lambda = \beth_\lambda < \mu$ and $\cf{\lambda} = \aleph_0$. Assume that there exists an EM blueprint $\Phi$ such that $(K, \Phi)$ is pseudo $(\mu, \LS (K))$-solvable.

  Assume further that for every $\theta \in [\LS (K), \lambda)$, and every $M, N \in K_{\Phi}^\ast$ (see Definition \ref{k-ast-def}) of size $\mu$, $M \lea N$ implies $M \lee_{L_{\infty, \theta}} N$. Then:

    \begin{enumerate}
      \item (\cite[Theorem IV.1.30]{shelahaecbook}) $K_{\Phi}^\ast$ has disjoint amalgamation in $\lambda$.
      \item (\cite[Theorem IV.4.10]{shelahaecbook}) If in addition there exists an increasing sequence of cardinals $\seq{\lambda_n : n < \omega}$ such that:
        \begin{enumerate}
          \item $\sup_{n < \omega} \lambda_n = \lambda$.
          \item For all $n < \omega$, $\lambda_n = \beth_{\lambda_n}$ and $\cf{\lambda_n} = \aleph_0$.
        \end{enumerate}

        Then there is a type-full good $\lambda$-frame with underlying class $(K_{\Phi}^\ast)_{\lambda}$.
    \end{enumerate}
\end{fact}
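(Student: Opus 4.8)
The plan is to work entirely inside the EM-model class $K_{\Phi}^\ast$ and to treat the syntactic characterizability hypothesis as the engine that converts combinatorics of linear orders into amalgams that stay in the class. Two preliminary observations extract exactly what the cardinal hypotheses are for. Since $\cf{\lambda} = \aleph_0$, fix an increasing sequence $\seq{\theta_n : n < \omega}$ of cardinals in $[\LS (K), \lambda)$ with $\sup_n \theta_n = \lambda$; since $\lambda = \beth_\lambda$, each $2^{\theta_n} < \lambda$, so the number of $L_{\infty, \theta_n}$-types over a set of size $<\lambda$ stays below $\lambda$, which controls all the type-counting to come. The hypothesis then says that on size-$\mu$ members of $K_{\Phi}^\ast$ the relation $\lea$ coincides with $L_{\infty, \theta}$-elementary substructure for every $\theta < \lambda$, and hence furnishes back-and-forth systems with pieces of size $<\theta$. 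Because every size-$\lambda$ member of $K_{\Phi}^\ast$ sits inside a size-$\mu$ member (an EM-model over a larger linear order), and because categoricity in $\mu$ from pseudo-solvability (Definition \ref{solvability-def}) makes that size-$\mu$ model unique, I would attack amalgamation in $\lambda$ by lifting the configuration to the unique size-$\mu$ model and running the back-and-forth there.

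For part (1), given $M_0 \lea M_1$ and $M_0 \lea M_2$ in $(K_{\Phi}^\ast)_\lambda$, the goal is a model $N \in K_{\Phi}^\ast$ with $\lea$-embeddings $f_\ell : M_\ell \to N$ ($\ell \in \{1, 2\}$) agreeing on $M_0$ and with $f_1[M_1] \cap f_2[M_2] = f_1[M_0]$. Writing the models as $\EM_{\tau (K)} (I_\ell, \Phi)$, I would assemble $N$ from an EM-model over a linear order $J$ amalgamating $I_1$ and $I_2$ with the new parts of $I_1$ and $I_2$ placed in disjoint intervals, so that disjointness of the images is forced by disjointness in $J$. The real work is that the abstract extensions $M_0 \lea M_\ell$ need not be induced by suborder inclusions, so the copy of $M_0$ inside each $\EM_{\tau (K)} (I_\ell, \Phi)$ is only an abstract $\lea$-submodel. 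Here the $L_{\infty, \theta_n}$ back-and-forth systems are used to align these two abstract copies of $M_0$: at stage $n$ one matches $<\theta_n$-sized configurations on the two sides over $M_0$, and the union over $n < \omega$ (again using $\cf{\lambda} = \aleph_0$) exhausts $M_1$ and $M_2$. Membership of the resulting object in $K_{\Phi}^\ast$ is secured by the EM-representation together with categoricity in $\mu$ and closure under increasing unions of size-$\mu$ chains (clause (3) of pseudo-solvability), the amalgamation itself being carried out on the unique size-$\mu$ model and then restricted.

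For part (2), with amalgamation in $\lambda$ in hand and the additional tower $\seq{\lambda_n : n < \omega}$ of $\beth$-fixed points with $\cf{\lambda_n} = \aleph_0$ converging to $\lambda$, I would build the type-full good $\lambda$-frame by Shelah's passage from solvability to a frame. First verify the underlying-class requirements for $(K_{\Phi}^\ast)_\lambda$: amalgamation is part (1); joint embedding and no maximal models follow from the EM-construction (embed two linear orders into a common one, respectively properly extend the order); and stability in $\lambda$ follows from categoricity in $\mu > \lambda$ together with the characterizability by counting $L_{\infty, \theta}$-types as in the setup, the non-amalgamation analogue of the argument behind Proposition \ref{easy-ap}. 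Then define nonforking through the behavior of types along the tower $\seq{\lambda_n}$ and check the frame axioms: invariance and monotonicity are formal; existence, extension, and symmetry use amalgamation from part (1); and local character, continuity, and uniqueness use the characterizability at cofinally many $\theta < \lambda$ together with the extra room provided by the fixed-point structure of the $\lambda_n$ to run the standard approximation and type-counting arguments.

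The main obstacle is the alignment step inside part (1): arranging that the object produced by the $L_{\infty, \theta_n}$ back-and-forth genuinely lands in $K_{\Phi}^\ast$ with all maps being $\lea$-embeddings, rather than merely yielding a model that is $L_{\infty, \theta}$-equivalent to a member of $K$ for each separate $\theta$. Since $L_{\infty, \theta}$-elementarity for all $\theta < \lambda$ is strictly weaker than $L_{\infty, \lambda}$-elementarity, one cannot simply take a union of the back-and-forth maps, and the EM-representation plus categoricity in $\mu$ must be used to bridge the gap; it is precisely the solvability structure that makes this go through. This delicate logic-versus-AEC interface is of the same flavor as the one this paper flags in the sibling Claim \ref{shelah-claim-0}, with the crucial difference that here characterizability is a \emph{hypothesis} rather than the conclusion to be established.
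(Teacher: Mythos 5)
First, a point of comparison: the paper does not prove this statement at all --- it is quoted as a \emph{Fact}, with both parts cited to Shelah's book (\cite[Theorem IV.1.30]{shelahaecbook} and \cite[Theorem IV.4.10]{shelahaecbook}), where the proofs occupy a substantial portion of Chapter IV. So a blind attempt must either reconstruct Shelah's argument or supply a new one; your proposal does neither, because at each of its critical junctures the required argument is asserted rather than carried out. The most serious problem is in part (1): your plan is to ``lift the configuration to the unique size-$\mu$ model and run the back-and-forth there.'' But placing $M_1$ and $M_2$ inside the size-$\mu$ model \emph{compatibly over $M_0$} is itself an instance of amalgamation over $M_0$ --- the very thing being proved --- so the lifting step is circular unless the mismatch between the two separate embeddings is repaired, and that repair is the entire content of the theorem. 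You correctly identify this as ``the main obstacle'' (the copies of $M_0$ are abstract $\lea$-submodels, not order-induced, and the $L_{\infty, \theta_n}$ back-and-forth systems for different $n$ do not cohere, since $L_{\infty,\theta}$-elementarity for all $\theta < \lambda$ is weaker than $L_{\infty,\lambda}$-elementarity), but you then resolve it by fiat: ``it is precisely the solvability structure that makes this go through.'' That sentence is a placeholder for the missing proof, not a proof. The same applies to disjointness: placing the new parts of $I_1$ and $I_2$ in disjoint intervals of $J$ controls intersections of EM hulls of suborders, but says nothing about the images of the abstract copies of $M_1$ and $M_2$, precisely because the embeddings are not order-induced.

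Part (2) is emptier still. ``Define nonforking through the behavior of types along the tower and check the frame axioms'' specifies no nonforking relation and verifies no axiom; producing a type-full good $\lambda$-frame (existence, uniqueness, symmetry, local character, continuity) is a major theorem in its own right, not a routine verification once amalgamation is known. Moreover, the specific role of the tower $\seq{\lambda_n : n < \omega}$ is never identified: the point of requiring each $\lambda_n$ to be a $\beth$-fixed point of cofinality $\aleph_0$ is that each $\lambda_n$ then satisfies the hypotheses of part (1) in place of $\lambda$, giving disjoint amalgamation cofinally often below $\lambda$, which is what the frame construction needs; your sketch instead treats the tower as generic ``extra room.'' As it stands, the proposal is an annotated restatement of the theorem together with a correct diagnosis of where the difficulty lies, but the difficulty itself is not addressed.
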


With similar methods, Shelah also proves a categoricity transfer. This is given by (the proof of) \cite[Observation IV.3.5]{shelahaecbook}:

\begin{fact}\label{shelah-categ-transfer}
  Let $K$ be an AEC and let $\seq{\lambda_n : n \le \omega}$ be an increasing continuous sequence of cardinals such that for all $n < \omega$:

  \begin{enumerate}
  \item $\LS (K) < \lambda_{\omega} = \beth_{\lambda_{\omega}}$.
  \item $K$ is categorical in $\lambda_n$.
  \item For every $\theta < \lambda_n$, $K_{\ge \lambda_{n + 1}}$ is $L_{\infty, \theta}$-syntactically characterizable.
  \end{enumerate}

  Then $K$ is categorical in $\lambda_{\omega}$.
\end{fact}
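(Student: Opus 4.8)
The plan is to fix an arbitrary $N \in K_{\lambda_\omega}$ together with one convenient reference model $M^\ast \in K_{\lambda_\omega}$ and to prove $N \cong M^\ast$; since $N$ is arbitrary, categoricity in $\lambda_\omega$ follows. First I would produce $M^\ast$: because $\lambda_\omega = \beth_{\lambda_\omega} > \LS(K)$ is a strong limit, some $\lambda_n$ exceeds the Hanf number of $K$, so $K$ has arbitrarily large models; fixing an EM blueprint $\Phi$ with $|\tau(\Phi)| = \LS(K)$, I set $M^\ast := \EM_{\tau(K)}(\lambda_\omega, \Phi)$. Discarding the finitely many indices with $\lambda_n \le \LS(K)$ (this affects neither the hypotheses nor $\lambda_\omega$), and using that the sequence is increasing and continuous, so $\cf{\lambda_\omega} = \omega$, I resolve both models into continuous increasing $\lea$-chains $N = \bigcup_{k < \omega} N_k$ and $M^\ast = \bigcup_{k < \omega} M^\ast_k$ with $\|N_k\| = \|M^\ast_k\| = \lambda_k$, where $M^\ast_k := \EM_{\tau(K)}(\lambda_k, \Phi)$.

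The soft core of the argument is to show $N \equiv_{L_{\infty, \theta}} M^\ast$ for every $\theta < \lambda_\omega$. Fix such a $\theta$ and pick $n$ with $\theta < \lambda_n$. By hypothesis~(3), $K_{\ge \lambda_{n+1}}$ is $L_{\infty, \theta}$-syntactically characterizable; since $N_{n+1}, N$ and $M^\ast_{n+1}, M^\ast$ all lie in $K_{\ge \lambda_{n+1}}$ with $N_{n+1} \lea N$ and $M^\ast_{n+1} \lea M^\ast$, this gives $N_{n+1} \lee_{L_{\infty, \theta}} N$ and $M^\ast_{n+1} \lee_{L_{\infty, \theta}} M^\ast$. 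By hypothesis~(2), $K$ is categorical in $\lambda_{n+1}$, so $N_{n+1} \cong M^\ast_{n+1}$. Chaining the three relations yields $N \equiv_{L_{\infty, \theta}} N_{n+1} \cong M^\ast_{n+1} \equiv_{L_{\infty, \theta}} M^\ast$, hence $N \equiv_{L_{\infty, \theta}} M^\ast$.

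It remains to upgrade this to an isomorphism, and here is where the real difficulty lies. If I knew the single stronger statement $N \equiv_{L_{\infty, \lambda_\omega}} M^\ast$, I would finish by a Karp-style argument: such an equivalence yields a nonempty family $I$ of partial isomorphisms of size $< \lambda_\omega$ closed under adjoining fewer than $\lambda_\omega$ elements on either side, and since $\cf{\lambda_\omega} = \omega$ I can build an increasing $\omega$-chain $f_k \in I$ whose domains and ranges exhaust prescribed initial segments of size $\lambda_k$ of $N$ and $M^\ast$; the union $\bigcup_k f_k$ is then a total surjective partial isomorphism, i.e.\ an isomorphism $N \cong M^\ast$ (note that the limit map itself need not lie in $I$). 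The obstacle is that the passage from ``$N \equiv_{L_{\infty, \theta}} M^\ast$ for all $\theta < \lambda_\omega$'' to ``$N \equiv_{L_{\infty, \lambda_\omega}} M^\ast$'' is automatic only when $\lambda_\omega$ is regular; at the singular cardinal $\lambda_\omega$ of cofinality $\omega$ an $L_{\infty, \lambda_\omega}$-sentence may nest quantifier blocks of lengths $\lambda_0, \lambda_1, \dots$ with supremum $\lambda_\omega$, and indeed the bare equivalences cannot force isomorphism in general. This is exactly the point at which the categoricity of the pieces (the isomorphisms $N_k \cong M^\ast_k$), rather than just the equivalences they produce, must be exploited: I expect to build the $\lambda_\omega$-back-and-forth family directly, amalgamating these isomorphisms into a coherent system. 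The extension step---given $f : A \cong B$ with $A \lea N$, $B \lea M^\ast$ of size $\lambda_k$, extend $f$ to cover a new element, which amounts to amalgamating a size-$\lambda_{k+1}$ extension of $A$ with $M^\ast$ over $B$---is the crux; since amalgamation is not assumed, it must be driven by the homogeneity of the EM model $M^\ast$ and the full strength of syntactic characterizability at each level. An alternative route worth trying is to first invoke Shelah's amalgamation theorem (Fact~\ref{shelah-ap}), whose cardinal hypotheses $\LS(K) < \lambda = \beth_\lambda$ with $\cf{\lambda} = \aleph_0$ match condition~(1) precisely, and then run a saturation argument in the spirit of Proposition~\ref{easy-ap}.
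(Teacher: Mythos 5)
Your reduction is correct as far as it goes: the Hanf number argument for arbitrarily large models, the resolutions of $N$ and $M^\ast$ into chains, and the chain of relations $N \equiv_{L_{\infty,\theta}} N_{n+1} \cong M^\ast_{n+1} \equiv_{L_{\infty,\theta}} M^\ast$ giving $N \equiv_{L_{\infty,\theta}} M^\ast$ for every $\theta < \lambda_\omega$ are all sound, and your diagnosis of why this does not finish the proof is also correct. But that diagnosis is exactly where your proposal stops, and the step you label ``the crux'' is the entire mathematical content of the statement. As you yourself observe, the equivalences $N \equiv_{L_{\infty,\theta}} M^\ast$ ($\theta < \lambda_\omega$) do not in general imply isomorphism at a singular cardinal of cofinality $\omega$, so any correct proof must use strictly more than what you have established; a declared intention to ``build the $\lambda_\omega$-back-and-forth family directly'' is not such a proof. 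Note also why the naive coherence attempt is blocked: hypothesis (3) only gives $L_{\infty,\theta}$-elementarity for $\theta < \lambda_n$ between models of size at least $\lambda_{n+1}$, so the parameter sequences it licenses always have length strictly below $\|N_k\|$, $\|M^\ast_k\|$. Hence the isomorphisms $N_k \cong M^\ast_k$ can never be transported whole through syntactic characterizability, and making successive isomorphisms cohere requires genuinely new input (this kind of limit-stage pitfall is the very subject of the paper; its introduction records that an earlier version contained a false lemma at just such a step).

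Your fallback route fails under the stated hypotheses: Fact \ref{shelah-ap} requires pseudo $(\mu, \LS(K))$-solvability in some $\mu$ \emph{strictly above} the cardinal $\lambda$ where one wants amalgamation, together with syntactic characterizability for models of size $\mu$. Here categoricity is assumed only at the $\lambda_n$ for $n < \omega$, so no such $\mu > \lambda_\omega$ is available, and Fact \ref{shelah-ap} cannot be invoked. (Even granting amalgamation in $\lambda_\omega$, a saturation argument in the spirit of Proposition \ref{easy-ap} would further need no maximal models, stability, and Galois-saturation of models whose size has cofinality $\omega$ --- none of which is free.) For comparison, the paper does not reprove this statement at all: it is quoted as a fact whose proof is deferred to Shelah \cite[Observation IV.3.5]{shelahaecbook}, precisely because closing the gap you identified requires the EM-model and solvability machinery of Shelah's Chapter IV rather than the $L_{\infty,\theta}$-equivalences alone. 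As it stands, your proposal is an accurate reduction plus a correct identification of the obstruction, but not a proof.
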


Combining these two facts with Theorem \ref{kueker-improvement}, we obtain Theorem \ref{main-thm-2} from the abstract. The argument already appeared in an early version of \cite{ap-universal-v6}:

\begin{thm}
  Let $K$ be an AEC categorical in unboundedly many cardinals. Assume that $K$ is eventually syntactically characterizable. Set $S$ to be the class of cardinals $\lambda$ such that:

  \begin{enumerate}
    \item $\LS (K) < \lambda = \beth_\lambda$ and $\cf{\lambda} = \aleph_0$;
    \item $K$ is categorical in $\lambda$; and
    \item there is a type-full good $\lambda$-frame with underlying class $K_{\lambda}$ (in particular, $K$ has amalgamation in $\lambda$).
  \end{enumerate}

  Then $S$ is stationary.
\end{thm}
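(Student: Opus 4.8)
The plan is to fix an arbitrary club $C$ of cardinals and produce a single $\lambda_\omega \in S \cap C$; as $C$ is arbitrary, this witnesses that $S$ is stationary. Replacing $C$ by its intersection with the club of $\beth$-fixed points exceeding $\LS(K)$, I may assume every element of $C$ is a $\beth$-fixed point above $\LS(K)$. Since $K$ is categorical in unboundedly many cardinals it has arbitrarily large models, so I fix once and for all an EM blueprint $\Phi$ for $K$ with $|\tau(\Phi)| = \LS(K)$. Using eventual syntactic characterizability, I also fix a function $\theta \mapsto \lambda(\theta)$ assigning to each infinite $\theta$ a cardinal $\lambda(\theta)$ such that $K_{\ge \lambda(\theta)}$ is $L_{\infty,\theta}$-syntactically characterizable; note that then $K_{\ge \beta}$ is $L_{\infty,\theta}$-syntactically characterizable for every $\beta \ge \lambda(\theta)$.

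I would then build by recursion on $n < \omega$ an increasing sequence $\seq{\lambda_n : n < \omega}$ of \emph{categoricity cardinals} of $K$, interleaved with two auxiliary sequences. Start with any categoricity cardinal $\lambda_0 > \LS(K)$. Given $\lambda_n$, set $\mu_n := \sup_{\theta < \lambda_n} \lambda(\theta)$ (a syntactic threshold); pick $c_n \in C$ with $c_n > \lambda_n$ (possible as $C$ is unbounded); pick a $\beth$-fixed point $\nu_n$ of cofinality $\aleph_0$ with $\nu_n > \max(c_n, \beth_{\lambda_n})$ (possible as such fixed points are unbounded); and finally, using categoricity in unboundedly many cardinals, pick a categoricity cardinal $\lambda_{n+1} > \max(\mu_n, \nu_n)$. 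Put $\lambda_\omega := \sup_n \lambda_n$. By construction $\lambda_n < \nu_n < \lambda_{n+1}$ and $\lambda_n < c_n < \lambda_{n+1}$, so $\lambda_\omega = \sup_n \nu_n = \sup_n c_n$ as well.

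Now I verify that $\lambda_\omega$ satisfies (1)--(3). For (1): $\cf{\lambda_\omega} = \aleph_0$ since $\lambda_\omega$ is the supremum of a strictly increasing $\omega$-sequence; $\lambda_\omega$ is a $\beth$-fixed point because it is the supremum of the $\beth$-fixed points $\nu_n$; and $\lambda_\omega > \lambda_0 > \LS(K)$. Moreover $\lambda_\omega = \sup_n c_n \in C$ by closure of $C$. For (2): the sequence $\seq{\lambda_n : n \le \omega}$ is increasing continuous with $\LS(K) < \lambda_\omega = \beth_{\lambda_\omega}$, each $\lambda_n$ ($n < \omega$) is a categoricity cardinal, and for every $\theta < \lambda_n$ we have $\lambda_{n+1} > \mu_n \ge \lambda(\theta)$, so $K_{\ge \lambda_{n+1}}$ is $L_{\infty,\theta}$-syntactically characterizable; hence Fact \ref{shelah-categ-transfer} yields that $K$ is categorical in $\lambda_\omega$. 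For (3): I apply Fact \ref{shelah-ap}(2) with $\lambda := \lambda_\omega$, using the $\beth$-fixed-point sequence $\seq{\nu_n : n < \omega}$ of cofinality $\aleph_0$ with supremum $\lambda_\omega$. Choose a categoricity cardinal $\mu > \max\bigl(\lambda_\omega, \sup_{\theta < \lambda_\omega} \lambda(\theta)\bigr)$. By the remark following Definition \ref{solvability-def}, $(K,\Phi)$ is pseudo $(\mu, \LS(K))$-solvable. For each $\theta \in [\LS(K), \lambda_\omega)$ we have $\mu > \lambda(\theta)$, so $K_{\ge \mu}$ is $L_{\infty,\theta}$-syntactically characterizable, and in particular any $M, N \in K_\Phi^\ast$ of size $\mu$ with $M \lea N$ satisfy $M \lee_{L_{\infty,\theta}} N$. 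Thus Fact \ref{shelah-ap}(2) produces a type-full good $\lambda_\omega$-frame with underlying class $(K_\Phi^\ast)_{\lambda_\omega}$. Since $K$ is categorical in $\lambda_\omega$ and $K_\Phi^\ast$ contains a model of size $\lambda_\omega$, every member of $K_{\lambda_\omega}$ is isomorphic to it and so lies in $K_\Phi^\ast$; hence $(K_\Phi^\ast)_{\lambda_\omega} = K_{\lambda_\omega}$, and amalgamation in $\lambda_\omega$ is part of the good frame. Therefore $\lambda_\omega \in S \cap C$.

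The main obstacle is reconciling two incompatible-looking demands on $\lambda_\omega$. The categoricity transfer (Fact \ref{shelah-categ-transfer}) requires the approximating cardinals to be \emph{categoricity} cardinals, whereas the frame-producing step (Fact \ref{shelah-ap}(2)) requires $\lambda_\omega$ to be approached by $\beth$-fixed points of cofinality $\aleph_0$ — and a categoricity cardinal furnished only by ``unboundedly many cardinals'' need not be a $\beth$-fixed point at all. The remedy is to carry two interleaved $\omega$-sequences to the \emph{same} limit $\lambda_\omega$: the categoricity cardinals $\lambda_n$ feeding the transfer, and the $\beth$-fixed points $\nu_n$ feeding the frame, squeezed so that $\lambda_n < \nu_n < \lambda_{n+1}$. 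Beyond this, the work is bookkeeping: clearing the syntactic thresholds $\mu_n$ (and, separately, choosing $\mu$ above all $\lambda(\theta)$ with $\theta < \lambda_\omega$) so that the syntactic-characterizability hypotheses of both facts hold, and dovetailing a cofinal sequence $c_n$ from $C$ so that the common limit lands in the prescribed club.
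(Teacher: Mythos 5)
Your proof is correct, and it rests on the same two pillars as the paper's own argument --- Fact \ref{shelah-categ-transfer} to get categoricity at the limit, and Fact \ref{shelah-ap}(2) to get the frame --- but it organizes the recursion in a genuinely different and leaner way. The paper builds increasing continuous sequences of length $\omega \cdot \omega$ and invokes the categoricity transfer at \emph{every} intermediate limit stage $\omega \cdot k$: those intermediate limits are made to be categoricity cardinals precisely so that they can play both roles at the final limit, serving simultaneously as the categoricity approximations required by Fact \ref{shelah-categ-transfer} and as the $\beth$-fixed points of cofinality $\aleph_0$ required by Fact \ref{shelah-ap}(2). Your key observation is that the second role does not require categoricity at all, so the two roles can be decoupled: a single $\omega$-sequence of categoricity cardinals $\lambda_n$ feeds the transfer, while freely chosen $\beth$-fixed points $\nu_n$ of countable cofinality (and club elements $c_n$) are interleaved so that all three sequences share the limit $\lambda_\omega$. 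Consequently you apply the transfer exactly once, at the end, rather than $\omega + 1$ times, and your recursion has length $\omega$ rather than $\omega \cdot \omega$; for this theorem the paper's heavier construction buys nothing extra. Two further points in your favor: you verify explicitly that $(K_{\Phi}^\ast)_{\lambda_\omega} = K_{\lambda_\omega}$, a step the paper passes over silently even though Fact \ref{shelah-ap}(2) only produces a frame on $(K_{\Phi}^\ast)_{\lambda_\omega}$ while membership in $S$ demands underlying class $K_{\lambda_\omega}$; and your syntactic thresholds $\lambda (\theta)$ need not be categoricity cardinals, whereas the paper's $\mu_0 (\theta)$ are required to be (harmlessly, but unnecessarily for the transfer step). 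One cosmetic remark: your preliminary reduction replacing $C$ by its intersection with the club of $\beth$-fixed points above $\LS (K)$ is never used --- the $\nu_n$ already guarantee $\lambda_\omega = \beth_{\lambda_\omega}$ --- so it can simply be deleted.
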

\begin{proof}
  For each cardinal $\theta$, let $\mu_0 (\theta)$ be the least cardinal $\mu > \theta$ such that $K$ is categorical in $\mu$ and for each $M, N \in K_{\ge \mu}$, $M \lea N$ implies $M \lee_{L_{\infty, \theta}} N$. Note that $\mu_0 (\theta)$ exists by definition of eventual syntactic characterizability.

  Let $C$ be a closed unbounded class of cardinals.

  Build $\seq{\lambda_i : i \le \omega \cdot \omega}$ and $\seq{\lambda_i' : i \le \omega \cdot \omega}$ increasing continuous such that for all $i \le \omega \cdot \omega$:

  \begin{enumerate}
    \item $\lambda_i' \in C$.
    \item For all $j < i$, $\lambda_j < \lambda_i' \le \lambda_i$.
    \item $\lambda_0 > \LS (K)$
    \item $K$ is categorical in $\lambda_i$
    \item $\lambda_{i + 1} > \mu_0(\beth_{\lambda_i})$.
  \end{enumerate}
  
  This is possible: when $i$ is a zero or a successor, we pick $\lambda_i' \in C$ such that $\lambda_j < \lambda_i'$ for all $j < i$ (this is possible as $C$ is unbounded). Then we pick a categoricity cardinal $\lambda_i \ge \lambda_i'$ strictly above $\LS (K)$ (and, if $i = j + 1$ also strictly above $\mu_0 (\lambda_j)$). For $i$ limit, let $\lambda_i := \lambda_i' := \sup_{j < i} \lambda_j$. Note that $\lambda_i' \in C$ as $C$ is closed. Also, $\beth_{\lambda_{i}} = \lambda_i$ and $\cf{\lambda_i} = \aleph_0$. Therefore by Fact \ref{shelah-categ-transfer}, $K$ is categorical in $\lambda_i$. This is enough: let $\lambda := \lambda_{\omega \cdot \omega}$. Since $C$ is club, $\lambda \in C$. Also, $\lambda$ is a limit of fixed points of the beth function of cofinality $\aleph_0$. Moreover $K$ is categorical (and hence pseudo solvable) in some $\mu \ge \mu_0 (\lambda)$, so by Fact \ref{shelah-ap} we obtain that $\lambda \in S$.
\end{proof}

\section{More on syntax in AECs}

This section is in preparation.

\bibliographystyle{amsalpha}
\bibliography{categoricity-infinitary-logics}

\end{document}